\theoremstyle{plain}
\newtheorem{theorem}{Theorem}[section]
\newtheorem{proposition}[theorem]{Proposition}
\newtheorem{lemma}[theorem]{Lemma}
\theoremstyle{definition}
\newtheorem{definition}[theorem]{Definition}
\newtheorem{problem}[theorem]{Problem}
\theoremstyle{remark}
\newtheorem{remark}[theorem]{Remark}
\def\nn{\|\cdot\|}
\def\ee{\varepsilon}
\def\xx{\overline x}
\def\N{{\mathbb N}}
\def\R{{\mathbb R}}
\def\B{{\mathbb B}}
\def\vv{\varphi}
\def\ee{\varepsilon}
\def\ee{\varepsilon}
\def\nn{\|\cdot\|}
\date{}
\begin{document}
\title[Asplund spaces $C_k(X)$ beyond Banach spaces]{Asplund spaces $C_k(X)$ beyond Banach spaces}
\subjclass[2010]{Primary: 46E10, Secondary: 54C35, 54G12}
\keywords{Asplund and Weak Asplund space, Fr\'echet (G\^ ateaux) differentiable mapping, $C_k(X)$-space, scattered compact space, Banach space}
\author{Marian Fabian}
\address{Institute of Mathematics, Czech Academy of Sciences, Prague, Czech Republic}
\email{fabian@math.cas.cz}
\author{Jerzy K{\c{a}}kol}
\address{Faculty of Mathematics and Informatics, A. Mickiewicz University,
61-614 Pozna\'{n}, Poland}
\email{kakol@amu.edu.pl}
\author{Arkady Leiderman}
\address{Department of Mathematics, Ben-Gurion University of the Negev, Beer Sheva, P.O.B. 653, Israel}
\email{arkady@math.bgu.ac.il}

\begin{abstract}
This paper addresses the Asplund property for the space of continuous functions $C_k(X)$ equipped with the compact-open topology, when $X$ is an arbitrary Tychonoff space.  Motivated by inconsistent definitions in prior literature extending the Asplund property beyond Banach spaces, we provide a unified and self-contained treatment of core results in this context.

 A characterization of the Asplund property for $C_k(X)$ is established, alongside a review of classical results, including the Namioka--Phelps theorem and its implications. All proofs are presented in a self-contained manner and rely on standard techniques.
\end{abstract}

\bigskip

\maketitle
\section{Introduction}

In his seminal book \cite{p}, R.R. Phelps observed that the first infinite-dimensional result concerning the differentiability properties of convex functions was obtained in 1933 by S. Mazur  \cite{Mazur}:
A continuous convex function $f:D \to \R$ defined on an open convex subset $D$ of a separable Banach space $E$, is G\^ ateaux differentiable
at the points of a dense $G_{\delta}$ subset of $D$.

In 1968, E. Asplund \cite{Asplund} extended Mazur's theorem in two ways. Namely, he proved the same statement for a more general class of Banach spaces;
and studied a more restricted class of Banach spaces (now called Asplund spaces) in which a
stronger conclusion holds: Fr\'echet differentiability on a dense $G_{\delta}$ subset of $D$.

Over the years, this area has attracted considerable attention of several  researchers. In our recent work \cite{KL3}, we noted that many contributions, especially those extending beyond the framework of Banach spaces, have not been consistently or adequately cited in the literature. This observation has motivated us to provide a more uniform account of relevant results and to present a comprehensive extension of the celebrated Namioka--Phelps theorem in the context of the spaces of continuous functions $C_k(X)$, now encompassing arbitrary Tychonoff spaces $X$.

In this paper all topological spaces are assumed to be Tychonoff.
They can be can be described as subspaces of the compact cubes $[0,1]^{T}$.

For a Tychonoff topological space $X$, we denote by
$C_k(X)$ the vector space of all continuous real-valued functions on $X$, equipped with the {\it compact-open topology},
 whose base consists of the sets
$$
V^\ee_K:= \{f\in C_k(X):\ f[K]\subset (-\ee,\ee)\}
$$
where $K$'s run throughout all nonempty compact subsets of $X$ and $\ee$'s run throughout all positive numbers; thus $C_k(X)$ becomes a locally convex space.

We use the abbreviation lcs for locally convex spaces. 
Let us begin by clarifying the notions of openness, continuity, and the Lipschitz property of mappings in the setting of lcs.

\begin{proposition}\label{ocl}
Let $X$ be a lcs, $D\subset X$,  $0\in V\subset X$ convex open sets,  and $\vv: X\to \R$ a convex function
The following assertions are equivalent:

(1) $\vv$ is $V$-bounded above, at some (every) $x\in D$, i.e., there is a real $t > 0$ such that
 $\sup\vv[(x+tV)\cap D]<\infty$.


(2) $\vv$ is $V$-continuous at some(every) $x\in D$, i.e. for every $\ee>0$ there is a $t>0$ such that $|\vv(x')-\vv(x)|<\ee$ whenever $x'\in (x+tV)\cap D$.

(3) $\vv$ is locally $V$-Lipschitzian, i.e., for every $x\in D$ there are $t>0$ and $c>0$ such that $\vv(x_1)-\vv(x_2)\le c\,p_V(x_1-x_2)$ whenever $x_1,x_2 \in (x+tV)\cap D$,
where $p_V$ is the Minkowski functional of $V$.
\end{proposition}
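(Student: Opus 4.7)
The plan is to establish the cyclic chain $(2)\Rightarrow(1)\Rightarrow(3)\Rightarrow(2)$, and then to deduce the ``some $\Rightarrow$ every'' statements at the end from the standard convex propagation of boundedness across the convex open set $D$.

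The easy implications are $(2)\Rightarrow(1)$ (take $\varepsilon=1$ in the definition of $V$-continuity) and $(3)\Rightarrow(2)$, for which one applies the Lipschitz bound to the pairs $(x_1,x_2)=(x',x)$ and $(x_1,x_2)=(x,x')$. Since $x'\in x+tV$ gives $p_V(x'-x)\le t$, the first one-sided bound is immediate; the opposite bound comes similarly after temporarily passing through the symmetric neighborhood $V\cap(-V)$, still a convex open $0$-neighborhood, whose Minkowski functional dominates both $p_V$ and $p_V(-\,\cdot\,)$.

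The heart of the argument is $(1)\Rightarrow(3)$. Assume $\sup\varphi[(x_0+tV)\cap D]\le M$, and, after subtracting the constant $\varphi(x_0)$, that $\varphi(x_0)=0$. Setting $W:=V\cap(-V)$, I would first obtain a lower bound via the reflection trick: for $y\in(x_0+sW)\cap D$ with $s\le t$ small, both $y$ and $2x_0-y$ lie in $(x_0+tV)\cap D$, and the midpoint identity $x_0=\tfrac12\bigl(y+(2x_0-y)\bigr)$ combined with convexity yield $\varphi(y)\ge-M$. Next I would run the classical three-point construction: for $x_1,x_2\in(x_0+sW)\cap D$ with $r:=p_V(x_1-x_2)>0$, set $x_3=x_2+\lambda(x_1-x_2)$ with $\lambda>1$ chosen so that $x_3-x_0\in tV$ still holds (by subadditivity of $p_V$, this is guaranteed as long as $\lambda r\le t-s$). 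Convexity then gives
$$\varphi(x_1)-\varphi(x_2)\le\tfrac{1}{\lambda}\bigl(\varphi(x_3)-\varphi(x_2)\bigr)\le\frac{2M}{\lambda},$$
and choosing $\lambda$ of order $(t-s)/r$ yields $\varphi(x_1)-\varphi(x_2)\le c\,p_V(x_1-x_2)$ with $c$ of order $M/(t-s)$.

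The main technical obstacle I anticipate is the bookkeeping required by the possible asymmetry of $V$: one must keep track of two neighborhoods simultaneously, the original $V$ (in which the upper bound is assumed) and the symmetric $W=V\cap(-V)$ (in which the lower bound is derived), and verify that $x_3$ stays in the $V$-region while $x_1,x_2$ stay in the $W$-region so that all three values of $\varphi$ are properly controlled. Finally, the ``some $\Rightarrow$ every'' statements follow by the standard convexity propagation on the convex open set $D$: any $y\in D$ lies in the relative interior of some segment $[x_0,z]$ with $z\in D$, and convexity transfers the local upper bound (hence all three properties) from a neighborhood of $x_0$ to a neighborhood of $y$.
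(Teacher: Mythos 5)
The paper gives no proof of Proposition~\ref{ocl} at all --- it is explicitly ``left to a reader as an exercise'' --- so the only comparison available is with the standard folklore argument, which is exactly the one you outline: upper boundedness plus convexity yields a lower bound by reflection, and the three-point (chord-extension) argument then yields the Lipschitz estimate, with the easy implications closing the cycle. That skeleton is the right one and is surely what the authors intend.

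There is, however, one step that fails as written, and it exposes a defect in the statement itself. In $(1)\Rightarrow(3)$ you assert that for $y\in(x_0+sW)\cap D$ the reflected point $2x_0-y$ again lies in $(x_0+tV)\cap D$: membership in $x_0+tV$ is fine by symmetry of $W$, but membership in $D$ is not guaranteed, and without it hypothesis (1) provides no upper bound on $\varphi(2x_0-y)$; the same problem affects the extension point $x_3$. This is not mere bookkeeping, because with the supremum in (1) taken over $(x+tV)\cap D$ as literally written the implication $(1)\Rightarrow(2)$ is false. Take $X=\R^2$, $D$ the open unit disc, $V=\{(a,b):\ |b|<1\}$ and $\varphi(a,b)=a$: then $\varphi$ is bounded above on all of $D$, so (1) holds at every point of $D$ with every $t$, yet for every $t>0$ the set $(tV)\cap D$ contains $(\tfrac12,0)$, so (2) fails at the origin. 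The proposition is only correct under the reading the authors actually use later (in the proof of Proposition~\ref{SY}), namely that in (1) one may choose $t$ with $x+tV\subset D$ as well; under that reading your reflection and extension points stay inside $x+tV\subset D$ and the argument goes through. Two smaller points: (i) you obtain the Lipschitz estimate on $x+sW$ with $W=V\cap(-V)$, whereas (3) asks for it on a set of the form $(x+t'V)\cap D$; for the balanced neighbourhoods $V^\ee_K$ used throughout the paper one has $W=V$ and nothing is lost, but for a general convex $V\ni0$ the symmetric reflection must be replaced by a one-sided slope estimate; (ii) the case $p_V(x_1-x_2)=0$ should be mentioned separately, since the choice $\lambda\sim(t-s)/r$ degenerates there, although letting $\lambda\to\infty$ still gives the required (trivial) inequality.
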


\noindent The proof is left to a reader as an exercise.

We will use some basic facts about scattered topological spaces, see for example \cite[Section 8.5]{Semadeni}.
A topological space $X$ is said to be {\it scattered} if every (closed) nonempty
subset $S$ of $X$ has an isolated point in $S$. 
Denote by $A^{(1)}$  the set of all non-isolated (in $A$) points of $A \subset X$. 
For ordinal numbers $\alpha$, the $\alpha$-th {\it Cantor--Bendixson derivative} of a topological space $X$
is defined by transfinite induction as follows. 
\begin{itemize}
\item  $X^{(0)} = X$;
\item  $X^{(\gamma)} = (X^{(\gamma-1)})^{(1)}$ \text{if $\gamma>0$ is a non-limit ordinal};
\item  $X^{(\gamma)} = \bigcap_{\alpha<\gamma} X^{(\alpha)}$ \text{if $\gamma$ is a limit ordinal}.
\end{itemize}
Clearly,  $X$ is scattered if and only if  $X^{(\alpha)}$ is empty for some ordinal $\alpha$.
For a compact space $X$ this implies the following easy assertion.

\begin{lemma} \label{scattered} 
A compact space $X$ is scattered if and only if there is no nonempty closed perfect subset of $K$.
\end{lemma}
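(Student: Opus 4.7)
The plan is to deduce the lemma from the characterization already recorded in the text: $X$ is scattered if and only if $X^{(\alpha)}=\emptyset$ for some ordinal $\alpha$. Against this background both implications follow from a transfinite induction. The forward direction uses a putative nonempty closed perfect subset to prevent any derivative from vanishing, while the reverse direction extracts such a subset as the stabilizer of the transfinite chain of derivatives.

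For $(\Rightarrow)$ I would argue by contraposition. Suppose $S\subseteq X$ is nonempty, closed, and perfect, so that $S^{(1)}=S$. A transfinite induction shows $S\subseteq X^{(\alpha)}$ for every ordinal $\alpha$: the base case $\alpha=0$ is trivial and limit stages are immediate from the intersection formula defining $X^{(\alpha)}$. For the successor step, take $p\in S\subseteq X^{(\alpha)}$; any open neighborhood $U$ of $p$ in $X$ meets $S\setminus\{p\}$ because $p$ is non-isolated in the perfect set $S$, and therefore meets $X^{(\alpha)}\setminus\{p\}$, placing $p\in(X^{(\alpha)})^{(1)}=X^{(\alpha+1)}$. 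Thus no $X^{(\alpha)}$ is empty, contradicting the assumption that $X$ is scattered. This is the main --- and fairly mild --- obstacle: one must pass carefully from ``non-isolated in $S$'' to ``non-isolated in $X^{(\alpha)}$'' by tracking neighborhoods in the correct ambient set.

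For $(\Leftarrow)$ I would again use contraposition: if $X$ is not scattered, every $X^{(\alpha)}$ is nonempty. An easy parallel induction shows each $X^{(\alpha)}$ is closed in $X$ (non-isolated points of a closed set in a Hausdorff space form a closed subset, and intersections of closed sets are closed). Since $X$ has only set-many subsets, the decreasing chain $(X^{(\alpha)})_\alpha$ must stabilize at some ordinal $\beta$, giving $X^{(\beta)}=(X^{(\beta)})^{(1)}$. The set $X^{(\beta)}$ is then nonempty, closed, and without isolated points, i.e., a nonempty closed perfect subset of $X$, as required. Compactness of $X$ enters only to guarantee nonemptiness at limit stages of the chain through the finite intersection property; beyond this the lemma is essentially a translation of the derivative-based characterization of scatteredness.
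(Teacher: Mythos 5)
Your proof is correct and follows exactly the route the paper intends: the lemma is stated there without proof as an ``easy assertion'' following from the Cantor--Bendixson derivative characterization of scatteredness, which is precisely what your two transfinite inductions spell out. The only inessential slip is your closing remark about compactness: nonemptiness at limit stages already follows from the recorded equivalence ``scattered iff some derivative is empty,'' so compactness is never actually used and the lemma holds for arbitrary topological spaces.
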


Next important results characterize scattered compact spaces in terms of continuous images.

\begin{theorem}\label{sca}\hfill
 \begin{enumerate}
\item {\rm \cite[Proposition 8.5.3]{Semadeni}}
Let $\pi: X \twoheadrightarrow Y$ be a continuous surjection, where $Y$ is a Hausdorff space.
If $X$ is a scattered compact space then so is $Y$.
\item {\rm \cite[Theorem 8.5.4]{Semadeni}}
A compact space $X$ is scattered if and only if there is no  continuous surjection from $X$ onto the closed interval $[0,1]$.
\end{enumerate}
\end{theorem}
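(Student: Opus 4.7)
The plan is to prove the two parts separately, using compactness together with a Zorn-minimality argument for (1) and a Tietze-extension construction for (2).

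For part (1), I would argue by contradiction via a Zorn-minimal subsurjection. Note that $Y$ is compact (as a continuous image of a compact space), so by Lemma \ref{scattered} it suffices to rule out a nonempty closed perfect $Q \subset Y$. Assume such $Q$ exists. The preimage $\pi^{-1}(Q)$ is a closed, hence scattered and compact, subset of $X$ that still surjects onto $Q$. Apply Zorn to the poset of closed $F \subseteq \pi^{-1}(Q)$ with $\pi(F) = Q$, ordered by reverse inclusion; any descending chain's intersection remains closed and still maps onto $Q$, since for each $q \in Q$ the nested compacta $F_\alpha \cap \pi^{-1}(q)$ have nonempty intersection. Let $M$ be a minimal element. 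Scatteredness of $M$ yields an isolated point $m_0 \in M$, so $M \setminus \{m_0\}$ is closed and by minimality does not surject onto $Q$; but $\pi(M \setminus \{m_0\})$ is compact, hence closed in $Q$, and any $y \in Q \setminus \pi(M \setminus \{m_0\})$ satisfies $\pi^{-1}(y) \cap M = \{m_0\}$, forcing the complement to be exactly $\{\pi(m_0)\}$. Thus $\pi(m_0)$ is isolated in $Q$, contradicting perfectness.

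For part (2), direction ($\Rightarrow$) is immediate from (1): since $[0,1]$ has no isolated points it is not scattered, so it cannot be a continuous image of a scattered compact space. For ($\Leftarrow$), suppose $X$ is not scattered; Lemma \ref{scattered} gives a nonempty closed perfect $P \subset X$. My plan is to build a continuous surjection $g : P \twoheadrightarrow [0,1]$ and extend it, via Tietze's theorem applied to the normal space $X$, to a continuous $\tilde g : X \to [0,1]$, which is automatically surjective because $\tilde g(X) \supseteq g(P) = [0,1]$.

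To construct $g$, I would first recursively build a binary tree of nonempty closed subsets $\{P_s : s \in \{0,1\}^{<\omega}\}$ of $P$, each with nonempty interior in $P$, satisfying $P_\emptyset = P$, $P_{s0} \cap P_{s1} = \emptyset$, and $P_{s0} \cup P_{s1} \subset P_s$. The recursion step works because every nonempty open subset of the perfect Hausdorff space $P$ contains at least two points, which by normality can be separated by two disjoint closed neighborhoods whose interiors lie inside the interior of the parent. With the tree in hand, set $v_s = \sum_{i<|s|} s(i)/2^{i+1}$ and define continuous $g_n : P \to [0,1]$ inductively: start with $g_0 \equiv 0$ and let $g_{n+1} = g_n + h_{n+1}$, where $h_{n+1} : P \to [0, 2^{-n-1}]$ is a Tietze extension of the function equal to $0$ on each $P_{s0}$ and $2^{-n-1}$ on each $P_{s1}$ for $|s| = n$ (defined on the closed set $\bigsqcup_{|s|=n}(P_{s0} \cup P_{s1})$). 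Then $g_n \equiv v_s$ on $P_s$ for $|s|=n$ and $\|g_{n+1} - g_n\|_\infty \le 2^{-n-1}$, so the uniform limit $g$ is continuous. Surjectivity follows because the nested nonempty compacta $\bigcap_m P_{s0^m}$ are mapped by $g$ to $\{v_s\}$, so every dyadic rational $v_s$ lies in $g(P)$; since $g(P)$ is compact and the $v_s$ are dense in $[0,1]$, we conclude $g(P) = [0,1]$.

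The main obstacle is the $(\Leftarrow)$ direction: in a non-metrizable compact perfect Hausdorff $P$, one cannot appeal to shrinking diameters, so the tree construction and the compatibility of the approximating functions must be organized purely through normality and separation. Once the binary tree is correctly set up—each level refining the previous in a disjoint way with nonempty-interior pieces—the Tietze-based inductive definition of the $g_n$ and the verification of uniform convergence and surjectivity proceed routinely.
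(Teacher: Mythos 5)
The paper does not prove this theorem at all---both parts are quoted from Semadeni's book with citations only---so there is no in-paper argument to compare against. Your proof is correct and is essentially the standard one. In part (1) the Zorn-minimality argument is sound: the poset of closed subsets of $\pi^{-1}(Q)$ still surjecting onto $Q$ is closed under intersections of decreasing chains (by compactness of the nested fibers $F_\alpha\cap\pi^{-1}(q)$), a minimal element $M$ has an isolated point $m_0$ by scatteredness of $X$, and the complement of the compact set $\pi(M\setminus\{m_0\})$ in $Q$ is forced to be the open singleton $\{\pi(m_0)\}$, contradicting perfectness; the degenerate case $M=\{m_0\}$ is also covered. In part (2), the forward direction correctly reduces to (1), and the backward direction's binary tree of disjoint closed sets with nonempty interior inside a perfect compact Hausdorff $P$, together with the telescoping Tietze extensions $h_{n+1}$ with $\|h_{n+1}\|_\infty\le 2^{-n-1}$, does yield a continuous $g$ with $g\equiv v_s$ on $\bigcap_m P_{s0^m}$, so $g(P)$ is a compact set containing all dyadic rationals and hence equals $[0,1]$; the final extension to $X$ is exactly Proposition~\ref{T}. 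Your closing remark is apt: the construction deliberately avoids any metric ``shrinking diameters'' argument, relying only on normality, which is what makes it work for arbitrary compact Hausdorff $X$.
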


We will use also the following classical theorem of S. Mazurkiewicz and W. Sierpi\'nski.

\begin{theorem}\cite[Theorem 8.6.10]{Semadeni}\label{ordinal}
Every compact scattered first-countable space (in particular, every metrizable compact space) is homeomorphic to a countable compact ordinal.
\end{theorem}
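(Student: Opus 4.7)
The plan is a transfinite induction on the Cantor--Bendixson rank of $X$. By compactness, the derivative process on $X$ must stabilize at a successor ordinal: if $\mu$ is the least ordinal with $X^{(\mu)}=\emptyset$, the finite intersection property for nested nonempty closed sets rules out $\mu$ being a limit, so $\mu=\nu+1$, and then $X^{(\nu)}$ is a compact space in which every point is isolated, hence finite. Call $\nu$ the rank of $X$ and induct on $\nu$.

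A preliminary fact I will use is that any compact Hausdorff scattered space is zero-dimensional: its connected components are singletons (a connected space with more than one point cannot be scattered, since an isolated point would be a proper clopen subset), and a compact Hausdorff totally disconnected space has a base of clopen sets. Using this, write $X^{(\nu)}=\{p_1,\ldots,p_m\}$ and split $X$ into pairwise disjoint clopen pieces $U_1,\ldots,U_m$ with $p_i\in U_i$. Each $U_i$ inherits compactness, scatteredness and first-countability, and satisfies $U_i^{(\nu)}=\{p_i\}$; so it suffices to treat the case $X^{(\nu)}=\{p\}$.

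For the base case $\nu=0$ the space $X$ is finite, hence homeomorphic to the ordinal $[0,|X|-1]$. For the inductive step, assume the claim holds for all compact scattered first-countable spaces of rank strictly less than $\nu$, and suppose $X^{(\nu)}=\{p\}$. First-countability at $p$ together with zero-dimensionality yields a strictly decreasing clopen neighbourhood base $V_1\supset V_2\supset\cdots$ of $p$ with $\bigcap_n V_n=\{p\}$. Set $W_n=V_n\setminus V_{n+1}$, a clopen (hence compact) subset of $X$ missing $p$ and remaining scattered and first-countable in the subspace topology. A routine induction on $\alpha$ gives $W_n^{(\alpha)}\subset W_n\cap X^{(\alpha)}$ for closed subsets, so $W_n^{(\nu)}=\emptyset$ and $W_n$ has rank strictly less than $\nu$. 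By the inductive hypothesis, $W_n\cong[0,\alpha_n]$ for some countable ordinal $\alpha_n$, with infinitely many $W_n$ nonempty since $p\in X^{(1)}$ is an accumulation point.

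It remains to reassemble the pieces. The identity $V_n=\{p\}\cup\bigsqcup_{k\ge n}W_k$ shows that $X$ is the one-point compactification (at $p$) of the locally compact sum $\bigsqcup_n W_n$. Concatenating the ordinal intervals $W_n\cong[0,\alpha_n]$ in the order inherited from $n$ gives an order- and therefore topological homeomorphism $\bigsqcup_n W_n\cong[0,\beta)$ with $\beta=\sum_{n\ge 1}(\alpha_n+1)$; since infinitely many summands are nonzero and each $\alpha_n$ is countable, $\beta$ is a countable limit ordinal of cofinality $\omega$, and its one-point compactification is precisely $[0,\beta]$, closing the induction. The main technical points, and where the proof is most prone to slippage, are the zero-dimensionality of compact Hausdorff scattered spaces and the verification that the one-point compactification of a countable concatenation of closed ordinal intervals is indeed the successor ordinal $[0,\beta]$; both are standard but worth spelling out explicitly.
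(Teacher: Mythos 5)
The paper does not actually prove this statement; it is quoted from Semadeni \cite[Theorem 8.6.10]{Semadeni}, so there is no in-paper argument to compare against. Your transfinite induction on the Cantor--Bendixson rank is the classical Mazurkiewicz--Sierpi\'nski argument and is essentially correct: the observation that compactness forces the derivative process to stop at a successor stage with $X^{(\nu)}$ finite, the reduction to the case $X^{(\nu)}=\{p\}$ via zero-dimensionality, the decomposition of a clopen neighbourhood base at $p$ into compact clopen ``annuli'' $W_n$ of strictly smaller rank, and the identification of $X$ with the one-point compactification of the concatenated ordinal sum $[0,\beta)$, $\beta=\sum_n(\alpha_n+1)$, all go through. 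Two small points deserve an explicit line each: (i) after choosing pairwise disjoint clopen $U_i\ni p_i$ you must dispose of the clopen leftover $X\setminus\bigcup_i U_i$ (it has rank $<\nu$, so absorb it into one $U_i$ or treat it as an extra summand), and record that a finite topological sum of countable compact ordinals is again one; (ii) in writing $X=\{p\}\cup\bigsqcup_k W_k$ you should take $V_1=X$ (legitimate, since $X$ is itself a clopen neighbourhood of $p$), otherwise your argument only compactifies $V_1$. Neither affects the substance. It is also worth noting that the countability of the resulting ordinal is produced by the induction itself (a countable sum of countable ordinals is countable), and that this is exactly where first-countability is indispensable: without it the conclusion fails already for the one-point compactification of an uncountable discrete set.
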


In several places, we  use the following applicable result,  whose proof follows from the classical Tietze--Urysohn extension theorem;
see for example \cite[Extension Lemma, p. 29]{Jarchow} or \cite[Section 3.6]{e}.

\begin{proposition}\label{T}
Let $X$ be a  Tychonoff space and let $L\subset X$ be a compact subset of $X$. Then every continuous function $f: L\rightarrow [0,1]$ can  be extended to a continuous function $F:X\rightarrow [0,1]$  defined on the whole space $X$.
\end{proposition}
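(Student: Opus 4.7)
The plan is to reduce Proposition \ref{T} to the classical Tietze--Urysohn extension theorem by first passing to a compact Hausdorff (hence normal) superspace of $X$. Since $X$ is Tychonoff, as already noted in the introduction it admits a homeomorphic embedding $X\hookrightarrow [0,1]^T$ for a suitable index set $T$. Let $K$ denote the closure of (the image of) $X$ in the product $[0,1]^T$. Then $K$ is a closed subspace of a compact Hausdorff space, hence $K$ itself is compact and Hausdorff, and in particular normal.

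Next I would observe that the compact subset $L\subset X$ is also compact when viewed as a subset of $K$ (being compact is an intrinsic property), and hence is closed in $K$ since compact subsets of Hausdorff spaces are closed. The restricted embedding allows us to treat $f$ as a continuous function $f:L\to[0,1]$ defined on the closed subset $L$ of the normal space $K$. Applying the Tietze--Urysohn extension theorem in $K$, we obtain a continuous extension $\widetilde F:K\to[0,1]$ of $f$.

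Finally, setting $F:=\widetilde F\!\restriction_X:X\to[0,1]$ gives a continuous function on $X$ that agrees with $f$ on $L$, which is precisely the required extension. Alternatively, one could bypass the concrete embedding into a cube and instead work inside the Stone--\v Cech compactification $\beta X$, using that $L$ is closed in $\beta X$ and $\beta X$ is normal; the argument is identical in spirit.

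There is no real obstacle in this proof: once the Tychonoff property is used to place $X$ inside a compact Hausdorff space, everything reduces to standard facts (compact Hausdorff spaces are normal, compact sets in Hausdorff spaces are closed, Tietze--Urysohn). The only point meriting a brief comment is that the target $[0,1]$ is already an interval, so no rescaling is needed and the extension produced by Tietze--Urysohn automatically takes values in $[0,1]$.
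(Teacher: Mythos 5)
Your proof is correct and is essentially the argument the paper has in mind: the paper simply cites the Tietze--Urysohn extension theorem (via \cite{Jarchow} or \cite{e}), and your reduction --- embed the Tychonoff space $X$ into a compact Hausdorff (hence normal) superspace, note that the compact set $L$ is closed there, extend by Tietze--Urysohn, and restrict back to $X$ --- is the standard way to carry out that deduction. No gaps.
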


Let $Z$ be a topological linear space and let $\vv: Z\to\R$ be a function, defined in a vicinity of a fixed point $z \in Z$.
We say that $\vv$ is {\it Fr\'echet differentiable} at $z$ if it is continuous at $z$ and there exists a continuous linear functional $\xi:Z\to\R$ such that
$$
\sup_{h\in M}\big( \vv(z+th)-\vv(z)-\xi(th)\big) =o(t)\quad {\rm as}\quad \R\ni t\to 0
$$
for every bounded set $M\subset Z$ (where $o(\cdot)$ is a suitable function, defined around $0$, such that
$\frac{o(t)}t\to0$ as $t\to0$).
This definition can be found in, say \cite{s}, \cite{y}. Let us temporarily call the function $\vv$ {\it Yamamuro--F.d}.

We say that $\vv$ is {\it Fr\'echet differentiable} at $z$ if it is continuous at $z$ and there exist a continuous linear functional $\xi:Z\to\R$ and a neighbourhood $V$ of $0\in Z$
such that
$$
\sup_{h\in V}\big( \vv(z+th)-\vv(z)-\xi(th)\big) =o(t)\quad {\rm as}\quad \R\ni t\to 0.
$$
This definition can be found in, say \cite[p. 11]{sc}. Let us temporarily call the function $\vv$ {\it Schwartz--F.d}.
In both versions, the (unique) $\xi$ is denoted as $\vv'(z)$ and is called {\it Fr\'echet derivative} of $\vv$ at the point $z$.

For a much detailed and profound account of the differentiability in linear topological spaces we recommend the survey article by V.I. Averbukh and O.G. Smolyanov \cite{as}.
   
Obviously, Schwartz--F.d. is stronger than  Yamamuro--F.d. However, the notions are equivalent in some important cases.
First, if  $Z:=C_k(X)$, where $X$ is any Tychonoff space, see Proposition \ref{SY} below; and, second, if $Z:=(E,w)$ or  $Z:=(E^*,w^*)$, where $E$ is any Banach space, see \cite{s}. 
Of course, the convexity of $\vv$ plays a crucial role here. 

\begin{proposition}\label{SY} Let $X$ be a Tychonoff topological space, let $D\subset C_k(X)$ be a convex open set, let $\vv: D\to\R$ be a convex continuous function, and let $f\in D$ be given.

\noindent Then $\vv$ is Schwartz--Fr\'echet differentiable at the point $f$ if and only if $\vv$ is Yamamuro--Fr\'echet differentiable at $f$.
\end{proposition}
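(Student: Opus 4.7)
The implication Schwartz $\Rightarrow$ Yamamuro is immediate from the fact that bounded sets in any lcs are absorbed by every neighborhood of zero. If $V$ witnesses Schwartz-F.d., then any bounded $M \subset C_k(X)$ satisfies $M \subset \lambda V$ for some $\lambda > 0$, so
$$\sup_{h \in M}\bigl(\varphi(f+th) - \varphi(f) - \xi(th)\bigr) \le \sup_{h' \in V}\bigl(\varphi(f + t\lambda h') - \varphi(f) - \xi(t\lambda h')\bigr) = o(t\lambda) = o(t).$$

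For the converse, the plan is to exhibit a single compact set $K \subset X$ and a suitable $\varepsilon > 0$ for which $V := V^\varepsilon_K$ witnesses Schwartz-F.d. Two compact sets feed into this $K$. First, by Proposition \ref{ocl}, the convexity and continuity of $\varphi$ at $f$ furnish a compact $K_1$ and positive numbers $\varepsilon_1, c$ such that $|\varphi(f_1) - \varphi(f_2)| \le c \sup_{K_1}|f_1 - f_2|$ whenever $f_1, f_2 \in f + V^{\varepsilon_1}_{K_1}$. Second, continuity of the linear functional $\xi$ on $C_k(X)$ yields a compact $K_2$ and a constant $C$ with $|\xi(g)| \le C \sup_{K_2}|g|$ for every $g \in C_k(X)$. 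Set $K := K_1 \cup K_2$.

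Now fix $\varepsilon > 0$ small, let $V := V^\varepsilon_K$, and for each $h \in V$ invoke Proposition \ref{T} (after affine rescaling $[-\varepsilon,\varepsilon] \leftrightarrow [0,1]$) to obtain an extension $\tilde h \in C_k(X)$ with $\tilde h|_K = h|_K$ and $\sup_X |\tilde h| \le \varepsilon$. Since $h - \tilde h$ vanishes on $K_1 \subset K$, the local Lipschitz bound gives $\varphi(f + th) = \varphi(f + t\tilde h)$ for $t$ small enough that $t\varepsilon < \varepsilon_1$; since $h - \tilde h$ also vanishes on $K_2 \subset K$, the bound on $\xi$ gives $\xi(th) = \xi(t\tilde h)$. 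Thus
$$\varphi(f + th) - \varphi(f) - \xi(th) = \varphi(f + t\tilde h) - \varphi(f) - \xi(t\tilde h),$$
and each $\tilde h$ lies in the bounded set $M_\varepsilon := \{g \in C_k(X) : \sup_X |g| \le \varepsilon\}$. The supremum over $h \in V$ is therefore dominated by the supremum over $g \in M_\varepsilon$, which is $o(t)$ by the Yamamuro hypothesis.

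The main obstacle is conceptual rather than computational: basic neighborhoods $V^\varepsilon_K$ in $C_k(X)$ are unbounded, so they cannot be fed directly into the Yamamuro condition. The reduction succeeds only because both $\varphi(f+th) - \varphi(f)$ and $\xi(th)$ depend solely on the restriction of $h$ to a fixed compact set, and this ``compactness of effective support'' is what permits Tietze extension to replace an arbitrary $h \in V^\varepsilon_K$ by a uniformly bounded surrogate inside $M_\varepsilon$.
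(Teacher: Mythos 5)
Your proof is correct and follows essentially the same route as the paper: both directions reduce the unbounded neighbourhood $V^\varepsilon_K$ to a bounded set of Tietze extensions (Proposition~\ref{T}) agreeing with $h$ on $K$, exploiting the fact that near $f$ both $\varphi$ and $\xi$ see only the restriction to a fixed compact set. The only cosmetic differences are that the paper obtains $\varphi(f+th)=\varphi(f+t\tilde h)$ from a separate convexity Claim rather than from the local Lipschitz estimate of Proposition~\ref{ocl}, and it disposes of $\xi$ by symmetrizing via $\varphi(f+th)+\varphi(f-th)-2\varphi(f)$ instead of enlarging $K$ to dominate $\xi$ as you do.
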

\begin{proof}
 Put
\begin{equation}\label{M}
M:=  \{h\in C_k(X):\ h[X]\subset (-1,1)\}.
\end{equation}
Clearly, $M$ is a bounded set. Using Proposition~\ref{ocl}, we find a compact set $K\subset X$ and a $\gamma>0$
such that
$$
f+V^\gamma_K\subset D\quad {\rm and}\quad \sup\vv [f+V^\gamma_K]<\infty,
$$
where
$$
V^\gamma_K:= \{h\in C_k(X):\ h[K] \subset (-\gamma,\gamma)\};
$$
recall that this $V^\gamma_K$ is a convex open neighbourhood of $0\in C_k(X)$. (This $V^\gamma_K$ will witness for the
Schwartz--Fr\'echet differentiability of $\vv$ at $f$).

As $\vv$ is Yamamuro--Fr\'echet differentiable at $f$, we have that
$$
\sup_{h\in M}\big(\vv(f+th)-\vv(f)-\vv'(f)(th)\big) = o(t)\quad{\rm as}\quad t\to0.
$$
Hence
$$
\sup_{h\in M}\sum\vv(f\pm th)-2\vv(f) = o(t)\quad{\rm as}\quad t\downarrow0.
$$
Now, consider any $h\in V^\gamma_K$. Then $h[K]\subset (-\gamma,\gamma)$.
Sine $X$ is a Tychonoff space and $K\subset X$ is compact, Proposition~\ref{T} yields a continuous
extension $ \check h: X\to (-\gamma,\gamma)$ such that $\check h|_K=h|_K$.
By the Claim below, we get that $\vv(f+th)=\vv(f+t\check h)$, and therefore,
$$
\sup_{h\in V^\gamma_K}\sum\vv(f\pm th)-2\vv(f) = o(t)\quad{\rm as}\quad t\downarrow0.
$$
We also know that $\lim_{t\to0}\frac1t(\vv(f+th)-\vv(f))=\vv'(f)h$ for every $h\in C_k(X)$.
Thus, using the convexity of $\vv$, we can conclude that

$$
\sup_{h\in V^\gamma_K}\big(\vv(f+th)-\vv(f)-\vv'(f)(th)\big) = o(t)\quad{\rm as}\quad t\to0.
$$
We proved that $\vv$ is Schwartz--Fr\'echet differentiable at the point $f$, where the neighborhood $V^\gamma_K$ is a witness for that.

It remains to formulate and prove the announced claim/observation, going back to B. Sharp \cite[p. 208, 209]{s}:
\medskip

\noindent {\tt Claim.}  {\it If $f,f'\in D$ and $f|_K=f'|_K$, then $\vv(f)=\vv(f')$.}
\smallskip

\noindent Proof of Claim. 
For all $\lambda>1$ we have $f+\lambda(f'-f)\in D  $ and
$$
f'=\frac{\lambda-1}\lambda f +\frac1\lambda(f+\lambda(f'-f));
$$
hence, by the convexity of $\vv$, we have
\begin{eqnarray*}
\vv(f') &\le& \frac{\lambda-1}\lambda \vv(f) +\frac1\lambda\vv(f+\lambda(f'-f)) \\
        &\le& \frac{\lambda-1}\lambda \vv(f) +\frac1\lambda\sup\vv[f+V_K^\gamma] \longrightarrow \vv(f)\ \ {\rm as}\ \ \lambda\uparrow\infty;
\end{eqnarray*}
Hence $\vv(f') \le \vv(f)$. And, by symmetry, $\vv(f) \le \vv(f')$.
\end{proof}

\noindent We remark that the Yamamuro--F.d. in the argument above could be replaced by the (yet weaker) `$M$--Fr\'echet differentiability'
where $M$ is defined in (\ref{M}) above.
\smallskip

\begin{remark}
In view of Proposition \ref{SY}, since throughout the paper we deal with the $C_k(X)$-spaces, we can and will use only the term 
{\it Fr\'echet differentiability} without any extra adjectives.
\end{remark}

Next, we follow the definitions proposed in \cite{es} and \cite{s}.

\begin{definition}\label{def3}
A lcs $E$ is called an {\it Asplund (weak Asplund) space} if every continuous convex function $f: D \to \R$, where $D\subset E$ is a nonempty open and convex set,
is Fr\'echet (G\^ ateaux, respectively) differentiable at the points of a dense $G_{\delta}$ subset of $D$.
Asplund (weak Asplund) spaces are abbreviated by {\it ASP (WASP}, respectively).
\end{definition}

\begin{definition}\label{def4}
A lcs $E$ is called a {\it Fr\'echet (G\^ ateaux) Differentiability Space} if every continuous convex function $f: D \to \R$, where  $D\subset E$ is a nonempty open and convex set,
is Fr\'echet (G\^ ateaux, respectively) differentiable at the points of a dense subset of $D$.
Fr\'echet (G\^ ateaux) differentiable spaces are abbreviated by {\it FDS {\rm (}GDS}, respectively).
\end{definition}

For the  next important statement, due to J. Rainwater, we refer to \cite[Proposition 1.25]{p}.

\begin{proposition}\label{Phelps}
For any Banach space and any open set $D\subset X$, the (possibly empty) set of points of  Fr\'echet differentiability of any continuous convex function from $D$ into $\R$ is a $G_{\delta}$.
\end{proposition}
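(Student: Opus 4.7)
My plan is to realize the set of Fréchet differentiability points as the intersection of a countable family of open sets defined via the symmetric second-difference quotient of $f$.

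First I would note that by Proposition \ref{ocl}, a continuous convex $f$ is locally Lipschitz on $D$ in the norm of $X$. For each $x\in D$ and each $t>0$ small enough that $x\pm th\in D$ for all $\|h\|\le 1$, set
$$
\Delta_t(x):=\sup_{\|h\|\le 1}\frac{f(x+th)+f(x-th)-2f(x)}{t},
$$
which is nonnegative by convexity and nondecreasing in $t$ (via the standard monotonicity of one-dimensional convex difference quotients). Then define
$$
U_n:=\{x\in D:\Delta_t(x)<1/n\text{ for some }t>0\},\qquad n\in\N,
$$
and the goal is to prove: (a) each $U_n$ is open, and (b) the set of Fréchet differentiability points of $f$ coincides with $\bigcap_{n\in\N}U_n$.

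For (a), given $x_0\in U_n$ with witness $t_0>0$, I would pick $r>0$ and $L>0$ such that $f$ is $L$-Lipschitz on a ball containing $x_0$ together with all points of the form $x\pm t_0 h$ for $x$ near $x_0$ and $\|h\|\le 1$. The elementary bound $|f(x\pm t_0 h)-f(x_0\pm t_0 h)|\le L\|x-x_0\|$ together with $|f(x)-f(x_0)|\le L\|x-x_0\|$ then yields
$$
\Delta_{t_0}(x)\le \Delta_{t_0}(x_0)+\frac{4L\|x-x_0\|}{t_0},
$$
so the strict inequality $\Delta_{t_0}(x)<1/n$ propagates to a neighborhood of $x_0$. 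For (b), if $f'(x)=\xi$ exists in the Fréchet sense, a direct substitution of the linear approximation shows $\Delta_t(x)\to 0$ as $t\to 0^+$, so $x\in\bigcap_n U_n$. Conversely, if $x\in\bigcap_n U_n$, then by monotonicity $\Delta_t(x)\to 0$ as $t\to 0^+$, and the sublinear one-sided derivative $h\mapsto d_+f(x;h)$ (which exists and is continuous because $f$ is locally Lipschitz and convex) satisfies $d_+f(x;h)+d_+f(x;-h)=0$ for every $h$, hence is a continuous linear functional $\xi$. Writing $A(t,h):=t^{-1}[f(x+th)-f(x)-t\xi(h)]$, convexity forces $A(t,h)\ge 0$ for $t>0$, while $A(t,h)+A(t,-h)=\Delta_t(x)\to 0$; hence $\sup_{\|h\|\le 1}|A(t,h)|\to 0$, which is Fréchet differentiability of $f$ at $x$.

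The main obstacle I expect is step (a), the openness of $U_n$: the supremum defining $\Delta_t$ is taken over the (possibly infinite-dimensional) unit ball, so one cannot invoke compactness to pass from the base point $x_0$ to nearby $x$. The uniform-in-$h$ bound supplied by the local Lipschitz property of $f$ is exactly what bypasses this difficulty, converting openness into the one-line estimate displayed above. All the other ingredients — monotonicity of $\Delta_t$, sublinearity of $d_+f(x;\cdot)$, and the conversion from antisymmetry plus sublinearity to linearity — are standard facts about convex functions on Banach spaces.
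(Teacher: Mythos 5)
Your proof is correct and follows essentially the same route as the source on which the paper relies: the paper does not prove this proposition itself but refers to \cite[Proposition 1.25]{p}, and your argument --- the sets $U_n$ built from the symmetric second difference quotient, openness of each $U_n$ via the local Lipschitz bound, and the identification of $\bigcap_n U_n$ with the set of Fr\'echet differentiability points --- is precisely that classical proof. The only cosmetic slip is that $A(t,h)+A(t,-h)$ equals the second difference quotient for the particular $h$ and is therefore $\le\Delta_t(x)$ rather than $=\Delta_t(x)$, which is the inequality you actually need.
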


\noindent Consequently, the classes of Banach ASP and Banach FDS spaces coincide.

The classification of lcs according to the dense or generic differentiability
of convex continuous functions extends earlier  research  of E. Asplund \cite{Asplund},  D.G. Larman and R.R. Phelps \cite{Larman} and  I. Namioka and  R.R. Phelps \cite{np} which dealt with just Banach spaces.

M. Talagrand \cite{Tal1} constructed a 1-Lipschitz function on a Banach space $C(K)$, with some compact space $K$, whose set of points of G\^ ateaux differentiability is dense but of the first category.
 Moreover,  M. \v Coban and P. Kenderov \cite{Coban} provided  examples showing that even if the set
of G\^ ateaux differentiability points of the $\sup$-norm of a Banach space $C(K)$, with $K$ a suitable compact space, is dense, it need not contain a nonempty $G_{\delta}$ set.
For example, this situation occurs for the {\it double arrows} compact space $K$.
Note also that there  is a delicate example of a Banach space that is GDS but not weak Asplund \cite{Moors}.

One should mention that a systematic research of lcs  that are ASP, WASP, FDS, and GDS was started  in 1990 by B. Sharp \cite{s},
and then it was  continued in joint papers \cite{es}, \cite{Eyland_Sharp2},  \cite{Eyland_Sharp3}.
Regrettably, it appears that these important works have not received due attention in the later publications by several authors. 
In our paper \cite{KL3} we clarified this issue and also considered
the questions about the preservation of WASP and ASP for arbitrary products of locally convex spaces.



\section{Namioka--Phelps theorem and additional supplementing results}
Assume first that we can find points of Fr\'echet differentiability of convex continuous functions
defined on separable Banach spaces, that is, on those spaces $E$ which possess a countable dense subset.
Can we deduce from this ``separable'' information that the whole $f$, defined on $E$, is somewhere
Fr\'echet differentiable? The answer is affirmative and this is caused by availability of a suitable ``separable reduction'' of
the phenomenon of Fr\'echet differentiability.

Having in mind the fundamental property included in Proposition~ \ref{Phelps}, we propose also next Proposition \ref{111}.
For the concepts mentioned in the statement below we refer, for example, to \cite[p. 317--321]{y1} and
\cite[Chapter 11, p. 625--629]{y2}. In particular, once a function $f$ is concave and $\partial_F(x)$ is nonempty, then
$f$ must be Fr\'echet differentiable at $x$, with $\partial_Ff(x)=\{f'(x)\}$.

\begin{proposition}\label{111} Let $(E,\nn)$ be a (separable) Banach space whose dual $E^*$ is separable,
let $D\subset E$ be an open set, and and let $f:D\longrightarrow(-\infty,\infty]$ be a lower semi-continuous function.

\noindent Then the set of all $x\in D$, where the Fr\'echet subdifferential $\partial_F f(x)$
is nonempty, is dense in $D$.
\end{proposition}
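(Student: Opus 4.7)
The plan is to derive the statement from the Borwein--Preiss smooth variational principle applied to a suitable truncation of $f$. The principle requires a Fr\'echet-smooth equivalent norm on $E$, and this is exactly what the separability of $E^*$ guarantees: by a classical renorming theorem of Kadec, $E$ can be re-equipped with an equivalent norm $\tnorm\cdot\tnorm$ that is Fr\'echet differentiable off the origin, so any convex combination $\psi(x)=\sum_n\mu_n\tnorm x-v_n\tnorm^2$ is Fr\'echet smooth on $E$.

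Given $x_0\in D$ and $\eta>0$ with $\overline{B}(x_0,\eta)\subset D$, I would seek a Fr\'echet-subdifferentiable point of $f$ inside $B(x_0,\eta)$. If $f\equiv+\infty$ on $B(x_0,\eta)$ there is nothing to prove near this $x_0$, so pick $\bar x\in B(x_0,\eta)$ with $f(\bar x)<\infty$. Lower semicontinuity, applied to the open set $\{y\in D:f(y)>f(\bar x)-1\}\ni\bar x$, produces $r>0$ with $\overline{B}(\bar x,r)\subset B(x_0,\eta)$ and $f\ge f(\bar x)-1$ on $\overline{B}(\bar x,r)$. Define $g:E\to(-\infty,+\infty]$ by $g(x):=f(x)$ if $x\in\overline{B}(\bar x,r)$ and $g(x):=+\infty$ otherwise. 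Then $g$ is proper, lower semicontinuous on $E$, bounded below by $f(\bar x)-1$, and $g(\bar x)\le\inf g+1$.

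Next I apply the Borwein--Preiss principle to $g$ with parameters $\varepsilon:=2$, $\lambda<r/4$, and starting point $\bar x$. It returns $v,v_n\in E$ and weights $\mu_n\ge0$, $\sum_n\mu_n=1$, such that the function $g+\psi$, with $\psi(x):=\frac{\varepsilon}{\lambda^2}\sum_n\mu_n\tnorm x-v_n\tnorm^2$, attains its global minimum at $v$, and $\tnorm v-\bar x\tnorm<\lambda<r/4$. The last inequality places $v$ in the interior of $\overline{B}(\bar x,r)$, so $g$ coincides with $f$ in a neighbourhood of $v$, and therefore $f+\psi$ attains a local minimum at $v$. Since $\psi$ is Fr\'echet smooth, the inequality $f(v+h)-f(v)\ge -\psi'(v)(h)+o(\|h\|)$ holds for $h$ near $0$, which is exactly the statement that $-\psi'(v)\in\partial_F f(v)$. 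The point $v\in B(x_0,\eta)$ is the required witness.

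The main obstacle is administrative rather than conceptual: lower semicontinuity affords only one-sided local control, so $f$ need not be locally bounded below, and this is repaired by adjoining the indicator of a small closed ball to force a global lower bound while preserving lsc. The parameter $\lambda$ must then be chosen small enough that the Borwein--Preiss minimizer lies strictly inside this ball, or else the Fr\'echet subgradient of $g$ at $v$ would not transfer to $f$. The separability hypothesis on $E^*$ enters exclusively through Kadec's theorem, and this is what upgrades the abstract variational principle from producing merely G\^ateaux to Fr\'echet subgradients.
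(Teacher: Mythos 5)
Your proof is correct and follows essentially the same route as the paper: an equivalent Fr\'echet-smooth norm obtained from the separability of $E^*$, followed by the Borwein--Preiss smooth variational principle applied to a localized version of $f$, yielding $-\psi'(v)\in\partial_F f(v)$. The only cosmetic difference is the localization device --- you adjoin the indicator of a small closed ball and use the Borwein--Preiss estimate $\|v-\bar x\|<\lambda<r/4$ to keep the minimizer interior, whereas the paper adds a $\tan^2$ penalty blowing up at the boundary of the ball; both serve the same purpose.
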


\begin{proof} We first realize that $E$ admits an equivalent Fr\'echet smooth (off the origin) norm. Indeed,
let $\{x_1, x_2,\ldots\}$ be a countable dense subset of the unit ball $S_E$ of $E$ and $\{\xi_1, \xi_2, \ldots\}$ be a countable dense subset of the unit ball $S_{E^*}$ of the dual $E^*$. Then the assignment sending $x^*\in E^*$ to
$$
|x^*|=\sqrt{\|x^*\|^2 + \hbox{$\sum_{n=1}^\infty 2^{-n}$}\langle x^*,x_n\rangle^2+\hbox{$\sum_{n=1}^\infty 2^{-n}$}{\rm dist}\, (x^*,{\rm sp}\{\xi_1,\ldots,\xi_n\}\big)^2}
$$
is an equivalent weak$^*$ lower semi-continuous norm on $E^*$. For the reader's convenience we indicate why $|\cdot|$ is
subadditive, i.e., the inequality $|x^*+y^*| \le |x^*|+|y^*|$ holds. It is not easy to find in the literature how to prove this.
A possible argument may profit from the subadditivity of the famous (Euclidean) $\ell_2$-norm.

A `small' Kadec--Klee like effort, see \cite[p. 113-117]{d}, reveals that
the norm $|\cdot|$ is locally uniformly rotund, i.e. if $x^*, x^*_1,\ x^*_2,\ldots$ are elements of $E^*$ such that
$2|x^*|^2+2|x_i^*|^2-|x^*+x^*_i|^2\to0$ when $i\to\infty$, then $|x^*_i-x^*|\to 0$ when $i\to\infty$.
Assume this is not so. Find then an $\ee>0$ and an infinite set $N\subset \N$ such that $\|x^*_i-x^*\| > 3\ee$
for every $i\in N$.
Find $m\in N$ so big that dist$(x^*,{\rm sp}\{\xi_1,\ldots,\xi_m\}\big)<\ee$.
A simple convexity argument provides an infinite set $N_1\subset N$ such that
dist$(x^*_i,{\rm sp}\{\xi_1,\ldots,\xi_m\}\big)<\ee$ for every $i\in N_1$. For every such $i$  find
$y^*_i\in {\rm sp}\{\xi_1,\ldots,\xi_m\}$ such that $\|x^*_i-y^*_i\|<\ee$, and so,
$$
\|y^*_i-x^*\| \ge \|x^*_i-x^*\| - \|x^*_i-y^*_i\| > 3\ee -\ee=2\ee.
$$

The sequence $(y^*_i)_{i\in N_1}$ is bounded
because $\|x^*_i\|\to\|x^*\|$ as $i\to\infty$, by convexity. Further, since it lies in a finite-dimensional space, there are $y^*\in {\rm sp}\{\xi_1,\ldots,\xi_m\}$ and an infinite set
$N_2\subset N_1$ such that $\|y^*_i-y^*\|\to0$ as $N_2\ni i\to\infty$.
Hence $\|y^*-x^*\|\ge 2\ee.$ Find $n\in\N$ so that $\|\langle y^*-x^*,x_n\rangle\| >\ee$.
Another  simple convexity argument yields that $\langle x^*_i,x_{n}\rangle \to \langle x^*,x_{n}\rangle$ as $i\to\infty$.
Hence we have that 
$$
(\ee>)\ \| y^*_i-x^*_i\| \ge  \langle y^*_i-x^*_i, x_{n}\rangle> \ee
$$ 
for all $i\in N_2$ big enough, a contradiction. We proved that our norm $|\cdot|$ on $E^*$ is LUR.

Now, having proved that our norm $|\cdot|$ on $E^*$ is LUR, the \v Smulyan test guarantees that the predual norm $|\cdot|$ on $E$, defined by
$$
E\ni x\longmapsto\sup\big\{\langle x^*,x\rangle:\ x^*\in E^*,\ |x^*|\le1\big\}=:|x|,
$$
is Fr\'echet differentiable at every nonzero point of $E$. For more details, see \cite[p. 43]{dgz}.

Next, let $\overline x\in Df$ and $\ee>0$ be given. We will find a $v\in E$ such that
$|v-\overline x|<\ee$ and $\partial_F f(v)$ is nonempty.
From the lower semi-continuity of $f$ find $\ee'\in(0,\ee)$ so small that $B(\overline x,\ee')\subset D$
and that $f$ is bounded below on the open ball
$B(\overline x,\ee')$. Define
\begin{eqnarray*}
\vv(x):=\begin{cases}
          \big(\tan(\hbox{$\frac\pi{2\ee'}$} |x-\overline x|)\big)^2 & \text{if $x\in B(\xx,\ee')$}\\
          \infty & \text{if $x\in E\setminus B(\xx,\ee')$}.
        \end{cases}
\end{eqnarray*}
Then $\vv:E\longrightarrow[0,\infty]$ is easily seen to be proper and lower semi-continuous.
Now, the Borwein--Preiss variational principle \cite[Theorem 4.20]{p} provides a Fr\'echet differentiable function
$\theta: D\longrightarrow[0,\infty)$ such that the sum $f+\vv+\theta$ attains infimum at some $v\in D$; clearly,
$v\in B(\xx,\ee')$. We thus have
$$
f(v+h)+\vv(v+h)+\theta(v+h) \ge f(v)+\vv(v)+\theta(v)\quad{\rm for\ every}\quad h\in E,
$$
and so
$$
f(v+h)-f(v)+\langle \vv'(v)+\theta'(v),h\rangle \ge -o(|h|)\quad{\rm as}\ \ h\in E\ \ {\rm and}\  \ |h|\to0.
$$
We proved that $-\vv'(v)-\theta'(v)\in \partial_F f(v)$, and hence $\partial_F f(v)\neq\emptyset$.
\end{proof}
\begin{remark} Originally, when the Borwein--Preiss principle was not at hand, the argument above could be performed also with just Ekeland's principle.
\end{remark}
Now a main question arises: Is it possible to
extend Proposition~\ref{111} to non-separable Asplund spaces? The answer is affirmative.  The arguments  goes back to
D.A. Gregory's \cite[Theorem 2.14]{p}. (It should be noted that an analogous statement for Gateaux differentiability is
hopelessly not valid
---take the `limsup-norm' on the space $\ell_\infty$).

\begin{proposition}\label{g}
Let $(E,\nn)$ be a non-separable Banach space, $f:D\rightarrow\R$ be a  convex continuous function
defined on an open convex set $D\subset E$, and $Z$ be a separable subspace of $E$.

\noindent Then there exists a separable subspace $Y$ of $E$, containing $Z$, with $D\cap Y$ nonempty, and such that, if
the restriction $f|_Y$ of $f$ to $Y$ is Fr\'echet differentiable at some $x\in D\cap Y$, then  $f$ is
Fr\'echet differentiable at $x$.
\end{proposition}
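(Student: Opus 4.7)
The strategy is a separable reduction built on the classical \v Smulyan–Mazur characterisation: for a continuous convex function $f$ on an open convex $D$, Fr\'echet differentiability at $x$ is equivalent to
$$\Phi(x,t)\;:=\;\sup_{h\in B_E}\!\bigl[f(x+th)+f(x-th)-2f(x)\bigr]\;=\;o(t)\quad\text{as }t\downarrow 0,$$
defined for all $t$ small enough that $x+tB_E\subset D$. Let $\Phi_Y(x,t)$ denote the analogous quantity where the supremum is taken over $h\in B_E\cap Y$. Two standing facts will be used repeatedly: (a) by convexity of $f$, the function $t\mapsto \Phi(x,t)/t$ is non-decreasing on its domain, so its limit as $t\downarrow 0$ equals its infimum, and in particular it suffices to test the limit along rational $t$; and (b) by Proposition~\ref{ocl}, $f$ is locally Lipschitz, and consequently, on any neighbourhood where $f$ is $L$-Lipschitz, $y\mapsto\Phi(y,t)$ is $4L$-Lipschitz uniformly in the admissible $t$.

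The construction is an $\omega$-step enlargement. Take $Y_0$ to be the closed linear span of $Z$ together with some fixed point of $D$, so that $Y_0$ is separable and $D\cap Y_0\neq\emptyset$. Inductively, given a separable $Y_n\supset Y_0$, fix a countable dense set $\{y^n_k\}_k$ in $Y_n\cap D$. For every triple $(k,j,m)\in\N^3$ with $t_j\in\mathbb Q_+$ such that $y^n_k\pm t_jB_E\subset D$, choose a sequence $(h^{n,k,j,m}_\ell)_\ell\subset B_E$ realising the supremum defining $\Phi(y^n_k,t_j)$ within $1/m$, i.e.
$$\sup_{\ell}\bigl[f(y^n_k+t_jh^{n,k,j,m}_\ell)+f(y^n_k-t_jh^{n,k,j,m}_\ell)-2f(y^n_k)\bigr]\;\ge\;\Phi(y^n_k,t_j)-\tfrac1m.$$
Set $Y_{n+1}:=\overline{\mathrm{span}}\bigl(Y_n\cup\{h^{n,k,j,m}_\ell:k,j,m,\ell\in\N\}\bigr)$, which is still separable. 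Finally, let $Y:=\overline{\bigcup_n Y_n}$; this is a separable subspace containing $Z$ with $D\cap Y\neq\emptyset$.

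For the verification, observe that by construction $\Phi_Y(y^n_k,t_j)\ge \Phi(y^n_k,t_j)-1/m$ for every $m$, so $\Phi_Y(y^n_k,t_j)=\Phi(y^n_k,t_j)$ at every $y^n_k$ and every admissible rational $t_j$. Now suppose $f|_Y$ is Fr\'echet differentiable at some $x\in D\cap Y$. Fix $\ee>0$; choose an open neighbourhood $U\subset D$ of $x$ on which $f$ is $L$-Lipschitz, and choose a rational $t_0>0$ small enough that $x+t_0B_E\subset U$ and $\Phi_Y(x,t)<(\ee/2)t$ for all $0<t<t_0$. For any rational $t\in(0,t_0)$, use density of $\bigcup_n Y_n$ in $Y$ to pick some $y^n_k$ in the dense set with $\|y^n_k-x\|<\ee t/(16L)$ (and still $y^n_k\pm tB_E\subset D$). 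Then by (b),
$$\Phi(x,t)\;\le\;\Phi(y^n_k,t)+4L\|y^n_k-x\|\;=\;\Phi_Y(y^n_k,t)+4L\|y^n_k-x\|\;\le\;\Phi_Y(x,t)+8L\|y^n_k-x\|\;<\;\ee t.$$
So $\Phi(x,t)/t<\ee$ for all rational $t\in(0,t_0)$, and by (a) for all $t\in(0,t_0)$. Hence $\Phi(x,t)/t\to0$, so $f$ is Fr\'echet differentiable at $x$.

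The main obstacle, and the point where one has to be careful, is the uniformity of the Lipschitz estimate in (b): it is crucial that the constant $4L$ does not depend on $t$, since otherwise one could not freely shrink $t$ while controlling the error $8L\|y^n_k-x\|$ by the chosen gap $\ee t/2$. This uniformity comes for free once one works on a single neighbourhood $U$ of $x$ on which $f$ is Lipschitz; by taking $t_0$ small we ensure all relevant $y\pm th$ stay in $U$. The only bookkeeping subtlety is restricting, at each inductive step, to those rational $t_j$ for which $y^n_k\pm t_jB_E\subset D$, but this set of $t_j$ is always cofinal at $0$, so nothing is lost.
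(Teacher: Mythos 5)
Your proposal is correct and follows essentially the same route as the paper's proof (Gregory's separable reduction): both use the symmetric difference-quotient criterion for Fr\'echet differentiability, enlarge $Z$ through countably many stages by adjoining nearly-worst directions at a countable dense set of points and rational scales, and transfer differentiability from $Y$ back to $E$ via the local Lipschitz estimate. The only differences are bookkeeping ones: the paper uses precision $t^2$ for both the chosen direction $u(x,t)$ and the approximating point $c$, whereas you use precision $1/m$ (forcing $\Phi_Y=\Phi$ on the dense set) and an $\varepsilon$-dependent approximation radius.
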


\begin{proof}
First, we need a  translation
of Fr\'echet differentiability (of convex functions)  to the terms of the space $E$:
\sl $f$ is Fr\'echet differentiable at $x\in E$ if and only if
$$
S(x,t):=\sup_{h\in B_X}\big(f(x+th)+f(x-th)\big)=2f(x)+o(t)\ \ {\rm as}\ \ \ Q_+\ni t\downarrow 0;
$$
\rm this is easy to check. (Here and below, $Q_+$ means the set of all positive rational numbers). 
For any $x\in E$ and any $t>0$, if $S(x,t)<\infty$, we find a vector $u(x,t)\in B_E$ such that
\begin{eqnarray}\label{22}
f\big(x+ t\, u(x,t)\big) +f\big(x-t\,u(x,t)\big) > S(x,t)-t^2.
\end{eqnarray}
(This $u(x,t)$ is almost ``the worst possible'' regarding the
Fr\'echet differentiability of $f$ at $x$).
Let $C_0$ be a countable dense subset of $Z$. We  construct countable sets
$C_0\subset C_1\subset C_2\subset\cdots\subset E$ as follows. Let $m\in\mathbb{N}$ be given and assume
that $C_{m-1}$ was already found. Find a countable set $C_m$ in $E$ such that
it is stable under making all finite linear combinations with rational coefficients, and that
it contains $C_{m-1}$ as well as the set $\big\{u(x,t):\ x\in C_{m-1},\ t\in Q_{+}\}$;
clearly, $C_m$ is again countable.
Do so for every $m\in\mathbb{N}$, and put finally $Y:=\overline{C_1\cup C_2\cup\cdots}\,$.
Clearly, $Y\supset Z$.

We claim that this $Y$ has the desired property. So, assume that
$f|_Y$ is Fr\'echet differentiable at some $x\in Y$. We  show that the whole $f$ is
Fr\'echet differentiable at $x$ as well. (If $x\in C_1\cup C_2\cup\cdots$, then it is rather easy to proceed.
So, we have to find an argument working also for $x\in Y\setminus C_1\cup C_2\cup\cdots$).
Let $L$ denote a Lipschitz constant of $f$ in a vicinity of $x$; see \cite[Proposition 1.6]{p}.
Pick any $t\in Q_+$ small enough (so that we can profit below from the $L$-Lipschitz property of $f$ around $x$).
Find then $c\in\bigcup_{m=1}^\infty C_m$ such that $\|c-x\|<t^2$. We can now
subsequently estimate for all sufficiently small $t\in Q_{+}$ (so that we can use the
Lipschitz property of $f$)
\begin{eqnarray*}
2f(x)&\le& S(x,t) < S(c,t)+ 2Lt^{2}\\
&<& f\big(c+ t\,  u(c,t)\big)+f(c-t\, u(c,t)\big)+t^2 + 2Lt^{2}\\
&<& f\big(x+ t\,  u(c,t)\big)+f(x-t\, u(c,t)\big)+t^2 + 4Lt^{2}\\
&\le&   \sup_{k\in B_Y}\big(f(x+ t  k)+f(x-tk)\big)+t^{2} + 4 Lt^{2}\\
&=&o(t) + 2f(x)\quad {\rm as}\quad Q_{+}\!\backepsilon t\downarrow0
\end{eqnarray*}
since $f|_Y$ is Fr\'echet differentiable at $x$.
Therefore, the ``whole'' $f$ is Fr\'echet differentiable at $x$.
\end{proof}

In order to prove our main Theorem \ref{MAIN}  we will need  (and prove) several additional important results, especially, 
a remarkable theorem due to I. Namioka and R.R. Phelps, \cite[Lemma VI.8.3]{dgz}, or \cite[Theorem 14.25]{y2}, or \cite[Theorem 18]{np} for selected sources.
Another recommended comprehensive source of information is the book \cite{f}.
The proof of Theorem \ref{1} uses  some arguments presented in \cite[p. 626--627]{y2}, see also \cite[Main Theorem]{Pelczynski-Semadeni}, but it does  not use any  technology  related to differentiability.

\begin{theorem}\label{1} For a compact space $X$ the following assertions are equivalent:
 \begin{enumerate}
  \item $X$ is scattered.
  \item Every separable subspace of $C(X)$ has separable dual.
  \item The Banach space $C(X)$ does not contain a copy of $\ell_1$ isomorphically.
 \end{enumerate}
\end{theorem}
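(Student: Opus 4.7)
My plan is to establish the equivalence via the cycle $(1) \Rightarrow (2) \Rightarrow (3) \Rightarrow (1)$, where the genuinely nontrivial step is $(1) \Rightarrow (2)$; the remaining two implications reduce to short applications of Theorem \ref{sca} together with the well-known fact that $C[0,1]$ contains an isomorphic copy of $\ell_1$ (either via Banach--Mazur universality or via continuous sawtooth approximations of Rademacher functions).

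For $(1) \Rightarrow (2)$, let $E \subseteq C(X)$ be separable and fix a countable dense sequence $(f_n) \subseteq E$. I would introduce the equivalence relation on $X$ given by $x \sim y$ iff $f_n(x) = f_n(y)$ for all $n$, and let $q\colon X \twoheadrightarrow Y := X/\!\sim$ denote the quotient map. Each $f_n$ factors as $\tilde f_n \circ q$ with $\tilde f_n\colon Y \to \R$ continuous, and the family $(\tilde f_n)$ separates points of $Y$ by construction. A standard compact-Hausdorff argument then shows that $Y$ is compact Hausdorff and that its topology coincides with the metric topology induced by
\[
d(y,y') := \sum_{n=1}^\infty 2^{-n} \frac{|\tilde f_n(y)-\tilde f_n(y')|}{1+|\tilde f_n(y)-\tilde f_n(y')|}.
\]
By Theorem \ref{sca}(1), $Y$ is scattered; by Theorem \ref{ordinal}, $Y$ is therefore homeomorphic to a countable compact ordinal, and in particular $Y$ is countable. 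A uniform-convergence argument shows that every $g \in E$ factors through $q$, so that $T\colon E \hookrightarrow C(Y)$, $T(g)(q(x)):=g(x)$, is a well-defined linear isometric embedding. Since $Y$ is a countable compact metric space, every regular Borel measure on $Y$ is purely atomic, whence $C(Y)^* \cong \ell_1(Y)$ is separable. Finally, by Hahn--Banach, the dual map $T^*\colon C(Y)^* \twoheadrightarrow E^*$ is surjective, exhibiting $E^*$ as a quotient of a separable space and hence separable.

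For $(2) \Rightarrow (3)$: if $C(X)$ contained $\ell_1$ isomorphically, then $\ell_1$ would be a separable subspace of $C(X)$, but its dual is isomorphic to $\ell_\infty$, which is not separable, contradicting $(2)$. For $(3) \Rightarrow (1)$ I argue by contrapositive: if $X$ is compact but not scattered, then Theorem \ref{sca}(2) provides a continuous surjection $\pi\colon X \twoheadrightarrow [0,1]$, and $f \mapsto f \circ \pi$ is a linear isometric embedding of $C[0,1]$ into $C(X)$; composing with an embedding $\ell_1 \hookrightarrow C[0,1]$ yields a copy of $\ell_1$ inside $C(X)$.

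The main obstacle I anticipate is the verification in $(1) \Rightarrow (2)$ that the quotient $Y$ is both compact metrizable and carries the announced topology. Compactness is immediate as a continuous image of compact, but identifying the quotient topology with the metric topology requires the observation that a continuous bijection from a compact space onto a Hausdorff space is a homeomorphism, applied after checking that the metric-induced topology on $Y$ is Hausdorff and weaker than the quotient topology. Once this hurdle is cleared, the remaining steps amount to bookkeeping, and the proof reduces to invoking Theorems \ref{sca} and \ref{ordinal}.
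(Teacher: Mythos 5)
Your proposal is correct and takes essentially the same route as the paper: your quotient $Y=X/\!\sim$ is canonically the image of $X$ under the diagonal map into a countable product of intervals used there, and both arguments then combine Theorem \ref{sca}, Theorem \ref{ordinal}, the identification of $C(Y)^*$ with $\ell_1(Y)$ for countable compact $Y$, and the Hahn--Banach surjectivity of the adjoint of the isometric embedding. The implications $(2)\Rightarrow(3)$ and $(3)\Rightarrow(1)$ coincide with the paper's as well.
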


\begin{proof}
(1)$\Longrightarrow$(2) Let $Y\subset C(X)$ be any separable subspace. Choose a countable and linearly independent set $N\subset B_Y$ such that its linear span is dense in $Y$.

Consider a correspondence
$$
X\ni x\longmapsto \{n(x):\ n\in N\}=:\psi(x)\in [-1,1]^{N}.
$$
Thus $\psi: X\longrightarrow [-1,1]^{N}$, and this is a continuous mapping. Denote
$L:=\psi(X)$; this is a metrizable compact space.
And, as $X$ is scattered,  $L$ is scattered as well, by Theorem \ref{sca}. Moreover, $L$ is countable, by Theorem \ref{ordinal}.

Next, let us enumerate $L$ as $\{z_{n}:\ n\in N\}$.
It is known that every  linear continuous functional $x^*\in C(L)^{*}$ has the form
$$
x^*(f)=\sum_{n}a_n f(z_n),\quad f\in C(L),
$$
and $\|x^*\|=\sum_{n\in N} |a_n| < \infty$,
where $(a_n)_{n\in N}$ is a suitable element of $\ell_1(L)$, see \cite[Main Theorem (11)]{Pelczynski-Semadeni} or \cite[Corollary 19.7.7]{Semadeni}.
Consequently, $C(L)^{*}$ is isometric to $\ell_1(L)$, and this implies that $C(L)^*$ is separable.

We will show that the subspace $Y\subset C(X)$ is isometric to a subspace of $C(L)$.  
For $n\in N$ and $x\in X$ we put
$$
\tilde n(\psi(x)):= n(x).
$$
The $\tilde n$ is a well defined function on $L$. Indeed, if $\psi(x)=\vv(h)$ for some $x, h\in X$, then  $\tilde n(\psi(x))= n(x)=n(h)=\tilde n(\psi(h))$.

Fix any $n\in N$. The function $\tilde n$ belongs to $C(L)$. Indeed, consider a net $(\psi(x_\alpha))$ in $L$ converging to some $\psi(x)\in L$.
This means that $m(x_\alpha)\to m(x)$ for every $m\in N$. Thus, in particular, $n(x_\alpha)\to n(h)$. Therefore, $\tilde n(\psi(x_\alpha))=n(x_\alpha)\longrightarrow n(x)= \tilde n(\psi(x))$.

Now, for any element $(a_n)_{n\in N} \in c_{00}(N)$ we have
\begin{eqnarray*}
\Big\|\sum_{n\in N} a_n \tilde n\Big\|&=& \sup_{l\in L}\Big|\sum_{n\in N} a_n\tilde n (l)\Big| =
\sup_{x\in X}\Big|\sum_{n\in N} a_n\tilde n (\psi(x))\Big| \cr
&=& \sup_{x\in X}\Big|\sum_{n\in N} a_nn(x)\Big|= \Big\|\sum_{n\in N}a_nn\Big\|.
\end{eqnarray*}
(In both $C(L)$ and $C(X)$, we considered the maximum norm $\|\cdot\|$).
Hence the mapping
$$
{\rm sp}\, N\ni \sum_{n\in N}a_nn\longmapsto \sum_{n\in N} a_n\tilde n \in C(L)
$$
is well defined, and it is a linear isometry.  Also, we know that the linear span sp$(N)$ is dense in $Y$. Thus, we can uniquely extend this mapping to the
linear isometry defined on the whole $Y$ into $C(L)$, say $T: Y\hookrightarrow C(L)$. 

Finally, the operator $T$ it injective, a simple consequence of the Hahn--Banach theorem reveals that the adjoint operator $T^*: C(L)^*\to Y^*$ is
surjective. And recalling that $C(L)^*$ was separable, $Y^*$ must be separable as well by Hahn-Banach Theorem.

\smallskip
(2)$\Longrightarrow$(3) Is clear since the dual of $\ell_1$ is isometric to the non-separable space $\ell_\infty\,$.

\smallskip
(3)$\Longrightarrow$(1) Assume on the contrary that $X$ is not scattered. By Theorem \ref{sca}, there is a continuous surjection $\rho: X\twoheadrightarrow [0,1]$.
Then the adjoint mapping $\rho^{*}: C[0,1]\hookrightarrow C(X)$ is a linear isometry into. But the Banach space $C[0,1]$ is universal for the class of all separable Banach spaces by the 
Banach--Mazur theorem, see \cite[Proposition 1.5]{Pelczynski-Bessaga}. Therefore, $C(X)$ contains an isometric copy of $\ell_1$. Obtained contradiction completes the proof.
\end{proof}

The  proof of our main   Theorem \ref{MAIN}  uses  the following  remarkable fundamental result due to Namioka and Phelps \cite{np} for Banach spaces $C(X)$.

\begin{theorem}[Namioka--Phelps]\label{Namioka+Phelps} For every compact space $X$,
the Banach space $C(X)$ is Asplund if and only if $X$ is scattered.
\end{theorem}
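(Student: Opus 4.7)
The plan is to prove both implications by assembling the machinery built up in the earlier sections.

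For the sufficient direction ($X$ scattered $\Rightarrow$ $C(X)$ Asplund), I would fix a continuous convex function $f: D \to \R$ on an open convex $D \subset C(X)$ and an arbitrary $x_0 \in D$, and produce Fréchet differentiability points of $f$ arbitrarily close to $x_0$. The strategy is a separable reduction. Starting with $Z:=\R x_0$, apply Proposition~\ref{g} to obtain a separable subspace $Y \subset C(X)$ containing $x_0$ such that Fréchet differentiability of $f|_Y$ at a point $x \in D \cap Y$ upgrades to Fréchet differentiability of $f$ at $x$. Since $X$ is scattered, Theorem~\ref{1} guarantees that $Y^*$ is separable. Now apply Proposition~\ref{111} to the Banach space $Y$ and the convex continuous (hence lower semi-continuous) function $f|_{D\cap Y}$: the set of points in $D \cap Y$ where the Fréchet subdifferential is nonempty is dense in $D \cap Y$. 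For a convex continuous function, nonempty Fréchet subdifferential at a point is equivalent to Fréchet differentiability there, so $f|_Y$ is Fréchet differentiable on a dense subset of $D \cap Y$. Pulling back through Proposition~\ref{g} yields a dense set of Fréchet differentiability points of $f$ in $D$, and Proposition~\ref{Phelps} upgrades density to a dense $G_\delta$.

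For the necessary direction ($C(X)$ Asplund $\Rightarrow$ $X$ scattered), I would argue by contrapositive. If $X$ is not scattered, Theorem~\ref{sca} provides a continuous surjection $\rho: X \twoheadrightarrow [0,1]$, which induces an isometric linear embedding $\rho^*: C[0,1] \hookrightarrow C(X)$ by $g \mapsto g \circ \rho$. The Banach space $C[0,1]$ is separable but, by the Banach--Mazur universality theorem, contains an isomorphic copy of $\ell_1$, whose dual $\ell_\infty$ is non-separable. Consequently $C(X)$ possesses a separable subspace whose dual is non-separable, which is incompatible with $C(X)$ being Asplund.

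The main obstacle is the last step of the necessary direction: justifying that a Banach Asplund space cannot contain a separable subspace with non-separable dual. This amounts to two facts: first, the Asplund property is inherited by closed subspaces (which can be shown by extending a convex continuous function on the subspace to a convex continuous function on the ambient space via a Hahn--Banach/Minkowski-functional construction, noting that Fréchet differentiability of the extension at a point of the subspace restricts to Fréchet differentiability of the original); and second, a separable Asplund Banach space must have separable dual, which is proved by applying the Asplund property to the norm itself and invoking a Šmulyan-type duality argument in the same flavor as the LUR renorming performed in Proposition~\ref{111}. Once these two inheritance facts are in place, the contradiction in the necessary direction is immediate, and the proof is complete.
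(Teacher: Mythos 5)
Your sufficiency direction follows the paper's route (separable reduction via Proposition~\ref{g}, then Theorem~\ref{1} and Proposition~\ref{111}, then Proposition~\ref{Phelps}), but one step is wrong as written. You claim that for a convex continuous function, nonemptiness of the Fr\'echet subdifferential at a point is equivalent to Fr\'echet differentiability there. It is not: for a convex continuous $f$ the Fr\'echet subdifferential contains the (always nonempty) convex-analytic subdifferential, since $f(x+h)-f(x)-\langle\xi,h\rangle\ge 0$ for every subgradient $\xi$; hence $\partial_F f(x)\neq\emptyset$ at \emph{every} point of continuity, and your claimed equivalence would make every such function everywhere Fr\'echet differentiable. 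The correct statement, recorded in the paper just before Proposition~\ref{111}, concerns \emph{concave} functions: if $f$ is concave and $\partial_F f(x)\neq\emptyset$, then $f$ is Fr\'echet differentiable at $x$. So you must apply Proposition~\ref{111} to $-f|_{D\cap Y}$, not to $f|_{D\cap Y}$. With that one-line repair, your sufficiency argument is essentially the paper's.

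Your necessity direction takes a genuinely different route, and it has a real gap. The paper does not pass through hereditariness of the Asplund property or through ``separable Asplund implies separable dual''; instead it takes a nonempty perfect set $P$ in $X$ (Lemma~\ref{scattered}), forms the explicit convex continuous function $\vv(f)=\max f(P)$, and shows by a direct two-point \v Smulyan-type estimate (using points $p,q\in P$ and a Urysohn function separating them) that $\vv$ is nowhere Fr\'echet differentiable, contradicting Asplundness. Your reduction requires two substantial theorems proved nowhere in the paper: (i) closed subspaces of Asplund Banach spaces are Asplund (the paper itself remarks this is ``not so easy to prove''), and (ii) a separable Asplund Banach space has separable dual. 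Your one-sentence justification of (ii) --- apply the Asplund property to the norm and invoke a \v Smulyan-type duality argument ``in the flavor of'' Proposition~\ref{111} --- is not a proof: Fr\'echet differentiability of the norm on a dense $G_\delta$ does not by itself yield separability of $E^*$ without a genuine further argument (e.g.\ producing, from an uncountable $\varepsilon$-separated subset of $B_{E^*}$, an equivalent norm or convex function that is nowhere Fr\'echet differentiable, or a Bishop--Phelps density argument for the set of derivatives). If you want the proof to remain self-contained in the spirit of the paper, replace this reduction by the direct construction with $\max f(P)$.
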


\begin{proof}
The necessity will be proved in a more general Theorem~\ref{MAIN}. Just replace there $X$ by $L$ in the proof of implication `(1)$\Longrightarrow$(2)'.

Sufficiency. Assume that $X$ is scattered. Take any open convex set $D\subset C(X)$
and any convex continuous function $\vv: D\to\R$.
We have to show that $\vv$ is Fr\'echet differentiable at the points of a
dense subset of $D$.
So fix any open set $U\subset D$. Pick some separable subspace $Z\subset C(X)$
such that $U\cap Z$ is a nonempty set. For this $Z$, we find a separable over(sub)space
$Z\subset Y\subset C(X)$ as it is done in Proposition~\ref{g}.

As $X$ is scattered, Theorem~\ref{1} guarantees that the dual $Y^*$ is separable. Hence, $Y$ is an Asplund space by Proposition~\ref{111}. Thus, there is a point $f$ in the set $U\cap Y$ (open in $Y$)
where the restriction $\vv_{|Y}$ is Fr\'echet differentiable. Now, the conclusion of Proposition~\ref{g} says that the whole function $\vv$ is Fr\'echet differentiable at $f$.
We proved that the set of points where the function $\vv$ is Fr\'echet differentiable is dense in $D$.
Finally, Proposition~\ref{Phelps} completes the proof.
\end{proof}

\section{Asplund spaces $C_k(X)$ over Tychonoff spaces $X$}
In the proof of our Theorem \ref{MAIN} of this section  we will need also the following fact, see also \cite[Proposition 3.13]{KL1} for a different approach.
For safety reasons, we make more accurate the following two synonyms. Let $X$ be a topological vector space, $Y$ be a Banach space, and  $T: Y\to X$ be a linear mapping.
We say that $T$ is an {\it embedding} or an {\it isomorphism into} $X$ if it is injective, continuous, and the inverse $T^{-1}: T[Y]\to Y$ is continuous 
The symbol $\stackrel \circ B_Y$ means the open unit ball in $Y$.

\begin{lemma}\label{ku} Let $X$ be a Tychonoff space and assume that there is a Banach space $Y$ which is isomorphic to a subspace of $C_k(X)$.

\noindent Then there exists a compact set $K\subset X$ such that $Y$ is isomorphic to a subspace of the (Banach) space $C(K)$.
\end{lemma}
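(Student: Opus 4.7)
The plan is to extract a single compact set $K \subset X$ from the topological structure of the embedding $T : Y \to C_k(X)$ and to verify that restricting each $T(y)$ to $K$ already yields an isomorphic copy of $Y$ inside the Banach space $C(K)$. The only genuine input is the continuity of $T^{-1} : T[Y] \to Y$ together with the explicit form of the basic neighbourhoods of $0$ in $C_k(X)$.

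First I would apply continuity of $T^{-1}$ at $0$ with $\varepsilon = 1$ to obtain a neighbourhood $W$ of $0$ in $T[Y]$ such that $T^{-1}(W) \subset \ob_Y$. Writing $W = T[Y] \cap V$ for some neighbourhood $V$ of $0$ in $C_k(X)$, and shrinking $V$ to a basic one, I may assume $V = V^\gamma_K$ for some nonempty compact $K \subset X$ and some $\gamma > 0$. Unpacking: whenever $y \in Y$ satisfies $\sup_{x \in K}|T(y)(x)| < \gamma$, necessarily $\|y\|_Y < 1$.

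A standard homogeneity argument then upgrades this implication to a linear estimate
$$
\|y\|_Y \;\le\; \frac{2}{\gamma}\,\sup_{x \in K}|T(y)(x)| \qquad (y \in Y),
$$
where the case $\sup_K |T(y)| = 0$ is forced by scaling to give $y = 0$. In the reverse direction, continuity of $T$ at $0$ applied to the neighbourhood $V^1_K$ produces a $\delta > 0$ such that $\sup_{x \in K}|T(y)(x)| \le \delta^{-1}\|y\|_Y$ for every $y \in Y$.

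Combining these two estimates, the restriction map $R : Y \to C(K)$ defined by $R(y) := T(y)|_K$ is linear, injective, continuous, and has continuous inverse onto its range; in other words, $R$ is an isomorphism of the Banach space $Y$ into $C(K)$. I do not anticipate any real obstacle here: the argument is essentially an unpacking of the definition of an lcs-embedding of a Banach space together with the description of a base of neighbourhoods of $0$ in $C_k(X)$.
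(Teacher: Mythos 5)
Your argument is correct and is essentially the paper's own proof: the paper packages the two continuity statements into a single sandwich inclusion $V^\ee_K\cap S[Y]\subset S[\stackrel{\circ}{B}_Y]\subset V^{\ee/\alpha}_K$ and works with set inclusions, while you apply the continuity of $T^{-1}$ and of $T$ separately and convert them into the two norm estimates by homogeneity — the same mechanism in slightly different notation. In both versions the compact set $K$ comes from the continuity of the inverse, and the restriction map to $C(K)$ is then checked to be an isomorphism onto its range.
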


\begin{proof}
Let $S:Y\hookrightarrow C_k(X)$ be the promised isomorphism into. Then there must exist some $\ee>0$, $\alpha>0$, and a compact subset $K\subset X$ such that
\begin{equation}\label{14}
V^\ee_K \cap S[Y] \subset S[\stackrel \circ B_Y]\subset V^{\ee/\alpha}_K\,.
\end{equation}

We show that the (Banach) space $Y$ is isomorphic to a subspace of the (Banach) space $C(K)$.
Define $\tilde S:Y\to C(K)$ by
$$
\tilde S(y):= (Sy)|_K, \quad y\in Y.
$$
$\tilde S$ is (linear and) injective. Indeed, take any $y\in Y$ such that $\tilde Sy=0$. This means that
$(Sy)|_K\equiv0$. Then, for every $n\in \N$ we have
$$
n Sy\in V^\ee_K\cap S[Y] \subset S[\stackrel \circ B_Y]
$$
by (\ref{14}), and so $Sy\in S[\frac1{n} \stackrel \circ B_Y]$, and hence $y\in \frac 1{n} \stackrel \circ B_Y$, as $S$ is injective. Therefore, $y=0$.

Now, fix any $y\in \stackrel \circ B_Y$. By (\ref{14}) $Sy \in V^{\ee/\alpha}_K$, i.e. $Sy[K] \subset (-\ee/\alpha,\ee/\alpha)$, and hence $\|\tilde Sy\| <\ee/\alpha$. We thus proved that
$$
\tilde S[\stackrel \circ B_Y] \subset \frac\ee\alpha \stackrel \circ B_{C(K)}.
$$
It remains to prove that
$\ee \stackrel \circ B_{C(K)}\cap \tilde S[Y] \subset \tilde S[\stackrel \circ B_Y]$.
Fix any $y\in Y$ such that $\tilde Sy \in \varepsilon \stackrel \circ B_{C(K)}$; thus $\|\tilde Sy\| <\varepsilon$.
Hence 
$Sy\in V^\ee_K \cap S[Y]$.
By (\ref{14}), there is a $y'\in \stackrel\circ B_Y$ so that $Sy=Sy'$. But $S$ is  injective. Hence $ y=y'$ and so $y\in \stackrel \circ B_Y$.
We proved that
$\varepsilon \stackrel \circ B_{C(K)} \cap \tilde S[Y] \subset \tilde S[\stackrel \circ B_Y],
$
and therefore, $\tilde S$ is an isomorphism of $Y$ onto $\tilde S[Y] \  \ (\subset C(K)$). Summarizing, we obtained that
$$
\ee \stackrel \circ B_{C(K)}\cap \tilde S[Y] \subset \tilde S[\stackrel \circ B_Y] \subset \frac\ee\alpha \stackrel \circ B_{C(K)};
$$
that is, analytically (using the injectivity of $\tilde S$),
$$
\forall y\in Y\quad \ee\|y\|_Y \le  \|\tilde Sy\|_{C(K)} \le \frac\ee\alpha \|y\|_Y.
$$  \end{proof}

M. Komisarchik and M. Megrelishvili  \cite{KM} say that a lcs $E$ has the {\it Namioka--Phelps property}  if  every bounded set $B\subset E$ is \emph{fragmented} on each weak$^{*}$-compact equicontinuous subset $K$ of the dual $E^*$ of $E$; that is, for each nonempty subset $A\subset K$ and each $\varepsilon >0$, there exists a weak$^{*}$-open subset $U\subset E^*$ such that $U\cap A\neq\emptyset$ and ${\rm diam}\{\langle v,x\rangle:\ x\in U\cap A,\ v\in B\}<\varepsilon.$

The  equivalence `(1)$\Longleftrightarrow$(2)' in the  main Theorem \ref{MAIN} formulated below has been proved in \cite[Corollary 3.8]{es} as a consequence of several partial facts. We believe that our approach is mostly self-contained.
The equivalences `(2)$\Longleftrightarrow$(3)$\Longleftrightarrow$(4)' were proved  in \cite{GKKM}.
Finally, the equivalencies `(2)$\Longleftrightarrow$(5)$\Longleftrightarrow$(6)' were proved in \cite{KM}.

\begin{theorem}\label{MAIN}
Let $X$ be a Tychonoff space. The following assertions are equivalent:
\begin{enumerate}
\item   $C_k(X)$ is an Asplund space.
\item   Every compact set in $X$ is scattered.
\item  The space $C_k(X)$ does not contain any isomorphic copy of $\ell_{1}$.
\item Every separable Banach subspace of $C_k(X)$ has separable dual.
\item $C_k(X)$ satisfies the Namioka--Phelps property.
\item $C_k(X)$ is tame, i.e. every weak$^{*}$-compact equicontinuous convex subset $K\subset C_k(X)^*$ is the closed (in the strong  topology) convex hull of the extreme points of $K$.
\end{enumerate}
\end{theorem}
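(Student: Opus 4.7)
The plan is to establish (2)$\Rightarrow$(1)$\Rightarrow$(2) together with (2)$\Rightarrow$(4)$\Rightarrow$(3)$\Rightarrow$(2), which secures the equivalence of (1)--(4); the equivalences (2)$\Leftrightarrow$(5)$\Leftrightarrow$(6) will be quoted from \cite{KM}. The unifying idea is to move between $C_k(X)$ and the Banach spaces $C(K)$ for compact $K\subset X$, exploiting Lemma~\ref{ku}, Theorem~\ref{1}, and the classical Namioka--Phelps Theorem~\ref{Namioka+Phelps}.

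For (2)$\Rightarrow$(1), I fix a convex continuous $\vv$ on an open convex $D\subset C_k(X)$ and a point $f_0\in D$. Proposition~\ref{ocl} yields a compact $K\subset X$ and $\gamma>0$ with $f_0+V^\gamma_K\subset D$ and $\vv$ bounded above there; the Claim inside the proof of Proposition~\ref{SY} shows that $\vv$ depends only on restrictions to $K$ within $f_0+V^\gamma_K$. Using Proposition~\ref{T} to supply extensions, the recipe $\tilde\vv(g):=\vv(\tilde g)$ defines a convex (bounded-above, hence continuous) function on the open ball $f_0|_K+\gamma\stackrel{\circ}{B}_{C(K)}$ in the Banach space $C(K)$. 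Scatteredness of $K$ combined with Theorem~\ref{Namioka+Phelps} furnishes a dense $G_\delta$ of Fr\'echet differentiability points for $\tilde\vv$. Each such $g_0$ lifts: any Tietze extension $\tilde g_0\in f_0+V^{\gamma/2}_K$ of $g_0$ is a Schwartz--Fr\'echet differentiability point of $\vv$ with witness neighbourhood $V^{\gamma/2}_K$ and derivative $\tilde\xi\circ R_K$, where $R_K\colon C_k(X)\to C(K)$ denotes restriction. This yields density of differentiability of $\vv$ throughout $D$.

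For (1)$\Rightarrow$(2) I argue contrapositively: if some compact $K\subset X$ is non-scattered, classical Namioka--Phelps delivers a convex continuous $\psi\colon D'\to\R$ on some open convex $D'\subset C(K)$ lacking a dense set of Fr\'echet differentiability points. Since $R_K$ is linear, continuous, and surjective (the latter via Proposition~\ref{T}), $\vv:=\psi\circ R_K$ is convex continuous on $R_K^{-1}(D')$. The critical observation is that any Schwartz--Fr\'echet derivative $\xi$ of $\vv$ at a point $f$ must vanish on $\ker R_K$ (as $\vv$ is constant on cosets of $\ker R_K$), hence factors as $\xi=\tilde\xi\circ R_K$ with $\tilde\xi$ continuous linear on $C(K)$, and the Fr\'echet estimate transfers via norm-controlled Tietze extensions to Fr\'echet differentiability of $\psi$ at $R_K(f)$. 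Density of the differentiability set of $\vv$ would therefore contradict the choice of $\psi$. For (2)$\Rightarrow$(4), Lemma~\ref{ku} embeds every separable Banach subspace $Y\subset C_k(X)$ into some $C(K)$; scatteredness of $K$ with Theorem~\ref{1} makes $Y^*$ separable. (4)$\Rightarrow$(3) is immediate since $\ell_1^{*}=\ell_\infty$ is non-separable. For (3)$\Rightarrow$(2), given a non-scattered compact $K\subset X$, Theorem~\ref{1} supplies an $\ell_1$-copy $(e_n)$ in $C(K)$; extending each $e_n$ to $\tilde e_n\in C_k(X)$ with $\|\tilde e_n\|_\infty$ bounded by $\|e_n\|_{C(K)}$ via Proposition~\ref{T}, the assignment $(a_n)\mapsto\sum a_n\tilde e_n$ embeds $\ell_1$ into $C_k(X)$, because $\sup_K|\cdot|$ is a continuous seminorm on $C_k(X)$ whose restriction to this span recovers the $\ell_1$-norm up to equivalence.

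The main obstacle I foresee is obtaining the dense $G_\delta$ conclusion in (2)$\Rightarrow$(1): Proposition~\ref{Phelps} is stated only for Banach spaces, and in the Schwartz--Fr\'echet definition the witness neighbourhood $V^\gamma_K$ varies with the base point. I expect to handle this locally: on each chart $f_0+V^\gamma_K$ with $K$ and $\gamma$ fixed by Proposition~\ref{ocl}, the Fr\'echet differentiability set of $\vv$ pulls back the Banach $G_\delta$ given by Proposition~\ref{Phelps} applied to $\tilde\vv$ on $f_0|_K+\gamma\stackrel{\circ}{B}_{C(K)}$, and hence is itself $G_\delta$ in $f_0+V^\gamma_K$; assembling these local $G_\delta$ sets via a covering argument should deliver the global dense $G_\delta$ in $D$ required by Definition~\ref{def3}.
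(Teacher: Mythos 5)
Your overall architecture ((2)$\Rightarrow$(1)$\Rightarrow$(2), (2)$\Rightarrow$(4)$\Rightarrow$(3)$\Rightarrow$(2), quoting \cite{KM} for (5),(6)) matches the paper's, and the implications (2)$\Rightarrow$(4)$\Rightarrow$(3) and (3)$\Rightarrow$(2) are fine (your direct $\ell_1$-transplant for (3)$\Rightarrow$(2) replaces the paper's route via the adjoint of a surjection onto $[0,1]$ and universality of $C[0,1]$, but both work). However, there are two points where your plan does not close.

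First, the $G_\delta$ issue in (2)$\Rightarrow$(1) is a genuine gap, and your proposed fix does not work. You localize with a compact set $K=K(f_0)$ depending on the base point and then hope to ``assemble these local $G_\delta$ sets via a covering argument.'' But an arbitrary (or even countable) union of $G_\delta$ sets need not be $G_\delta$, and $C_k(X)$ is in general not metrizable, so the metric-space fact that locally $G_\delta$ sets are $G_\delta$ is unavailable. The paper's resolution is precisely the observation you are missing: by convexity, the \emph{single} compact set $L$ and the boundedness of $\vv$ obtained from continuity at \emph{one} point $u\in D$ propagate to all of $D$ (for every $v\in D$ there is $\gamma>0$ with $v+V^\gamma_L\subset D$ and $\vv$ bounded above there, with the \emph{same} $L$). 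Consequently the whole function factors as $\vv=\psi\circ T$ through the one Banach space $C(L)$, where $T$ is restriction to $L$; $T[D]$ is open, $T^{-1}$ of a dense set is dense and of a $G_\delta$ is $G_\delta$, and Rainwater's Proposition~\ref{Phelps} applied to $\psi$ on $T[D]$ then yields a single global dense $G_\delta$ in $D$. Without this one-chart factorization your argument only delivers density, which does not meet Definition~\ref{def3}.

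Second, in (1)$\Rightarrow$(2) you invoke the classical Namioka--Phelps necessity (a non-scattered $K$ gives a convex continuous function on $C(K)$ without a dense set of Fr\'echet differentiability points) and pull it back through $R_K$. Within this paper that is circular: Theorem~\ref{Namioka+Phelps} explicitly defers its necessity direction to the proof of Theorem~\ref{MAIN}(1)$\Rightarrow$(2), so you may not quote it here. The paper instead argues directly and self-containedly: it extracts a nonempty perfect set $P\subset L$ (Lemma~\ref{scattered}), takes the explicit convex continuous function $\vv(f)=\max f(P)$ on all of $C_k(X)$, and shows by a two-point \v Smulyan-type computation (choosing $p,q\in P$ with $f(p)=\vv(f)$, $f(q)>\vv(f)-\delta/2$, and a Urysohn function separating them) that $\vv$ is nowhere Fr\'echet differentiable. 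Your factorization mechanics ($\xi$ vanishing on $\ker R_K$, transferring the estimate by Tietze extensions) are sound, but you still need an independent witness function on $C(K)$; replacing your citation by the paper's $\max f(P)$ construction would repair this.
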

\begin{proof}
(1)$\Longrightarrow$(2)
  Assume that $C_k(X)$ is an Asplund space and that some compact subset set $L\subset X$ is not scattered. Find then a nonempty closed perfect set $P\subset L$, by
Lemma~\ref{scattered}.
Consider the function $\vv: C_k(X)\to\R$ defined by
$$
C_k(X)\ni f\longmapsto \max f(P)=:\vv(f).
$$
Clearly, $\vv$ is convex. It is also continuous on $C_k(X)$. Indeed, pick any $f\in C_k(x)$ and any $\ee>0$.
Then
for every $h\in V_P^\ee$ we have
$$
\vv(f+h)-\vv(f)\le \max f(P)+\max h(P)-\max f(P) < \ee
$$ and, similarly,
$$\vv(f)-\vv(f+h)\le \max (f+h)(P)+\max (-h)(P)-\max (f+h)(P) < \ee;
$$
thus $|\vv(f+h)-\vv(f)| <\ee$.

Put
$$
M:=\{h\in C_k(X):\ h(X) \subset [-1,1]\};
$$
this is obviously a bounded set.
As $C_k(X)$ is Asplund and $\vv$ is continuous convex,
$\vv$ is Fr\'echet differentiable at some $f\in C_k(X)$.
Thus, in particular, there is a linear continuous $\xi: C_k(X)\rightarrow\R$ such that
$$
\sup_{h\in M}\Big|\frac{(\vv(f+th)-\vv(f)}t - \xi(h)\Big|\longrightarrow 0\quad {\rm as}\quad t\to0;
$$
and hence (getting rid of the almost redundant $\xi$)
$$
(0 \le )\ \sup_{h\in M}\frac{(\vv(f+th)+\vv(f-th)-2\vv(f)}t\longrightarrow 0\quad {\rm as}\quad t\to0;
$$
Thus, in particular, (for $\ee:=\frac12$) there is a $\delta>0$ such that
$$
\vv(f+\delta h)+\vv(f-\delta h)-2\vv(f) < \frac\delta2
$$
for every $h\in M$.

Now, pick a $p\in P$ so that $f(p)=\vv(f)$. As $P$ is perfect, there is a $q\in P\setminus \{p\}$ such that $f(q)> \vv(f)-\frac\delta 2$.
We recall that $X$ is a Tychonoff space and $L$ a compact set in it. Hence, there is a continuous function $h: X\to [0,1]$ such that
$h(p)=0$ and $h(q)=1$. Note that then $\vv(h)=1$ and $h\in M$.
Thus
\begin{eqnarray*}
\delta & =& \delta h(q)-\delta h(p) = (f+\delta h)(q) + (f-\delta h)(p) -f(q)-f(p) \cr
& <& \vv(f+\delta h) + \vv (f-\delta h) - 2\vv(f) +\frac\delta 2 < \frac\delta 2 + \frac\delta 2 =\delta
\end{eqnarray*}
(Note that we used a \v Smulyan like argument, see \cite{dgz}) and \cite[p. 342--344]{y2}; a contradiction. We proved that $L$ must be scattered.
 \smallskip

(2)$\Longrightarrow$(1)  Here we freely imitate arguments from \cite{s} and \cite{es}. Let $D\subset C_k(X)$ be any (nonempty) open convex set and consider any convex function $\vv: D\to \R$, everywhere continuous in the $k$-topology of the space $C_k(X)$.
We want to show that the set of points where $\vv$ is Fr\'echet differentiable contains a dense $G_\delta$ subset of $D$.



Pick some $u\in D$. The continuity of $\vv$ at $u$ yields an $\ee>0$ and a compact subset $L\subset X$ such that $u+V^\ee_L \subset D$ and that $\vv$ is bounded above on the set $u+V^{\ee}_L$.
A simple picture, with \cite[p. 4, 5]{p}, then reveals that
for every $v\in D$ there is a $\gamma>0$ such that $v+ V^{\gamma}_L \subset D$ and that
$\vv$ is bounded above on $v+ V^{\gamma}_L$.

\smallskip

Let $C(L)$ be the Banach space of all continuous functions on $L$, with `maximum' norm. Its closed unit ball (around the origin) is denoted by $B_{C(L)}$. Consider the 'restriction' mapping $T: C_k(X)\to C(L)$ defined by
$$
C_k(X)\ni f\longmapsto f|_L=:Tf \in C(L).
$$
Clearly, $T$ is linear. It is also continuous since, for every $\gamma>0$, we have $T[V^\gamma_L]\subset
\gamma\!\!\stackrel \circ B_{C(L)}$. (We have here even equality by Proposition~\ref{T}). Since $T$ is continuous, $T^{-1}(\Omega)$ is open or $G_\delta$
whenever $\Omega\subset C(L)$ is, respectively, open or $G_\delta$. Let us further check that $T[D]$ is an open subset of $C(L)$.
So, fix any $f\in D$. Find a $\gamma>0$ so that $f+ V^\gamma_L\subset D$. Then
$$
Tf +\gamma\!\!\stackrel \circ B_{C(L)} = T[f+ V^\gamma_L] \subset T[D];
$$
the equality here comes from Proposition~\ref{T}, as $X$ is Tychonoff and $L$ is compact.
We verified that $T[D]$ is open.

We further observe that, given any dense subset $\Omega$
in $T[D]$, then $T^{-1}[\Omega]$ is dense in $D$. Indeed,
pick any $v\in D$, any $\beta>0$, and any compact set $L'$ in $X$. We want to show that $(v+V^\beta_{L'})\cap T^{-1}
[\Omega]\neq\emptyset$. Take $\gamma\in(0,\beta)$ so small that $v+V^{\gamma}_L\subset D$.
As $\Omega$ is dense in $T[D]$, we have that
$$
(Tv+\gamma\!\! \stackrel \circ B_{C(L)})\cap \Omega\neq\emptyset;
$$
hence, there is an $h\in \gamma\!\! \stackrel \circ B_{C(L)}$ so that $Tv+h\in\Omega$.
By Proposition~\ref{T}, 
we find a continuous function $\check h:X\to\R$ such that
$\check h[X] \subset (-\gamma,\gamma)$ and $\check h|_L=h$. Then $v+\check h \in v + V^{\gamma}_{L'} \subset v + V^{\beta}_{L'}$
and $T(v+\check h)=Tv + h\ (\in\Omega)$. Thus $$v+\check h\in (v+V^\beta_{L'})\cap T^{-1}[\Omega].$$
We proved that $T^{-1}[\Omega]$ is dense in $D$; note that $T^{-1}[\Omega]\cap D$ {\sl is dense in} $D$, too.

Now, we define the function $\psi: T[D]\to\R$ as
$$
\psi(g):=\vv(\check g),\quad g\in T[D],
$$
where $\check g$ is {\tt any} element of $D$ 
such that $\check g|_L=g$.
This is a correct definition because, 
if $f,f'\in D$ and $f|_L=f'|_L=g$, then by the Claim from the proof of Proposition~\ref{SY}, $\vv(f)=\vv(f')$.

The function $\psi$ is convex. Indeed, take any $g_1,g_2\in {C(L)}$ and any $\alpha\in (0,1)$
Find some $\check{g_1}, \check{g_2}\in D$, with $\check{g_1}|_L=g_1$ and $\check{g_2}|_L=g_2$.
Then 
$$\big(\alpha \check{g_1} +(1-\alpha)\check{g_2}\big)|_L= \alpha g_1+(1-\alpha)g_2,$$ and
hence
\begin{eqnarray*}
\psi\big(\alpha g_1+(1-\alpha)g_2\big) &=& \vv \big(\alpha \check{g_1} +(1-\alpha)\check{g_2}\big) \le \alpha \vv(\check{g_1}) + (1-\alpha) \vv(\check{g_2}\big)\cr
&=&\alpha \psi({g_1}) + (1-\alpha) \psi({g_2}).
\end{eqnarray*}

Let us check that our $\psi$ is continuous on $T[D]$. Fix any $f\in D$. We know that there is an $\ee>0$ so that $f+V^\ee_L$ lies in $D$ and that
$\vv[f+ V^\ee_L]$ is bounded above.
Thus the set $$\psi[Tf+\ee\!\!\stackrel \circ B_{C(L)}] = \vv[f+V^\ee_L]$$ is also bounded above (we needed Proposition~\ref{T}). And having this, then
\cite[p. 4, 5]{p} (valid for Banach spaces) guarantees the continuity of $\psi$.

Now, since $L$ is a scattered compact set by the assumption,  Theorem \ref{Namioka+Phelps}  says that
the Banach space $C(L)$ is Asplund. Hence our $\psi$ is Fr\'echet differentiable at the points of
a dense $G_\delta$ subset, say $\Omega$, of $T[D]$.
Fix any $f$ in $T^{-1}(\Omega)\cap D$ (which is already known to be a dense $G_\delta$ subset of $D$).
Find an $\ee>0$ so small that $f+V^\ee_L\subset D$ (we already proved that such an $\ee$ exists).
As $Tf\in\Omega$, the function $\psi$ is Fr\'echet differentiable at $Tf$, with derivative, $\xi: C(L)\to\R$, say.
Having this we get
\begin{eqnarray*}
&&\sup_{h\in V_L^\ee}\big|\vv(f+ th)-\vv(f) - \xi(th|_L)\big| \cr
&= & \sup_{z\in \ee \stackrel \circ B_{C(L)}}\big( \psi(T f + tz)-\psi(T f)-\xi(tz)\big) = o(t)\,\,{\rm as}\,\,(-\ee,\ee)\ni t\to 0;
\end{eqnarray*}
Here we again used Proposition~\ref{T}. Therefore, our $\vv$ is Fr\'echet differentiable at $f$, with derivative $$C_k(X)\in h \longmapsto \xi(h|_L)=: \vv'(f)h.$$
Actually we should in addition check that this $\vv'(f)$ is linear and continuous. Indeed, we have that
$\xi[V^\ee_L] \subset (-\ee,\ee)$.
\smallskip



(3)$\Longrightarrow$(2)
Assume that $X$ contains a compact subset $K$ which is not scattered. By Theorem~\ref{sca}, there exists a continuous surjection $\rho: K\twoheadrightarrow [0,1]$.
Using Proposition~\ref{T}, $\rho$ can be extended to a continuous surjection, say  $\check \rho: X\twoheadrightarrow [0,1]$.
The natural adjoint mapping to $\check\rho$, say $T: C[0,1]\hookrightarrow C_k(X)$, is then an  embedding (i.e. an isomorphism onto its range). Indeed, a bit of routine calculation  reveals that
$$
V^1_K \cap T[C[0,1]]= T\big[\stackrel \circ B_{C[0,1]}\big]\  \ (\subset V^1_L\ \ {\rm for \ every\  compact}\  \ L\subset X).
$$
(We note that the above equality says that our $T$ is a `bound-covering mapping', see \cite{es}).
Since $C[0,1]$ is universal in the class of all separable Banach spaces, by \cite[Proposition 1.5]{Pelczynski-Bessaga}, 
the Banach space $\ell_{1}$ (isometrically) embeds into $C[0,1]$. Hence $\ell_1$  embeds into $C_k(X)$, which is a negation of (3).
\smallskip

(2)$\Longrightarrow$(3)  Let (2) hold. Assume that $C_{k}(X)$ contains an isomorphic copy of $\ell_{1}$. By Lemma \ref{ku} there exists 
a compact (scattered) subset $K\subset X$ such that $\ell_1$ is isomorphic to a subspace of the Banach space $C(K)$. But his contradicts Theorem \ref{1}.
Hence $C_k(X)$ does not contain any isomorphic  copy of $\ell_1$.

\smallskip
(4)$\Longrightarrow$(3) It is true since the dual of $\ell_1$ is isometric to $\ell_\infty$, which is non-separable.

\smallskip

(2)$\Longrightarrow$(4) It follows from Theorem~\ref{1} and Lemma~\ref{ku}.
\end{proof}

\begin{remark} (a) Statement (1) in Theorem ~\ref{MAIN} can be strengthened to:

\noindent `{\sl For every Asplund Banach space $E$ the product $C_k(X)\times E$ is also Asplund.}'

\noindent Indeed, in the proof of `(2)$\Longrightarrow$(1)', it is enough to replace, in suitable places, `$C_k(X)$' by `$C_k(X)\times E$'.
More specifically, the mapping $C_k(X)\times E\ni(f,g)\longmapsto (f|_L, g)\in C(L)\times E$ is still `'bound covering' (as it was the mapping $T$). And the latter product (of two Banach spaces) is Asplund by \cite{np}.
 Another way how to check this fact is by using the paper \cite{es}; 

\noindent (b) From another hand, if both spaces $X$ and $Y$ have all compact subsets scattered, respectively, 
then $C_k(X)\times C_k(Y)$ is Asplund. Indeed, it is well known that $C_k(X)\times C_k(Y)$ is isomorphic to $C_k(X\oplus Y)$, 
and then in the direct sum $X\oplus $Y all compact subsets are scattered and Theorem \ref{MAIN} applies.
\end{remark}

\begin{problem}
Assume that $C_k(X)$ is an Asplund space. Is the product $C_k(X)\times E$ Asplund for every Asplund lcs $E$?
\end{problem}

Analyzing the proof of `(2)$\Longrightarrow$(1)' in Theorem \ref{MAIN},
 we can deduce  the following fact related with Theorem \ref{Phelps},  see also \cite[Theorem 3.2, Example 3.6]{es}.

\begin{theorem}\label{first}
If $X$ is a Tychonoff space and $\vv$ is a continuous convex function on a nonempty  convex open  $D\subset C_k(X)$,
the (possibly empty) set of points of Fr\'echet differentiability of $\vv$ is a $G_{\delta}$.
\end{theorem}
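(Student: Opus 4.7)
The strategy is to run essentially the reduction from the proof of `(2)$\Rightarrow$(1)' of Theorem~\ref{MAIN}, but to feed it into the Banach-space Proposition~\ref{Phelps} in place of Theorem~\ref{Namioka+Phelps}. Concretely, I would first fix a point $u\in D$ and use continuity of $\varphi$ to obtain $\ee>0$ and a compact set $L\subset X$ with $u+V^\ee_L\subset D$ and $\sup\varphi[u+V^\ee_L]<\infty$. The convexity-based `simple picture' argument already used in Theorem~\ref{MAIN} then delivers, for every $v\in D$, a $\gamma>0$ with $v+V^\gamma_L\subset D$ and $\varphi$ bounded above on $v+V^\gamma_L$; thus the single compact set $L$ serves as a uniform gauge for $\varphi$ throughout $D$. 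With this $L$ in hand I would introduce the continuous linear surjection $T:C_k(X)\to C(L)$, $Tf=f|_L$; observe (via Proposition~\ref{T}) that $T[D]$ is open in the Banach space $C(L)$; and consider the auxiliary function
\[
\psi:T[D]\to\R,\qquad \psi(g):=\varphi(\check g),
\]
where $\check g$ is any element of $D$ with $(\check g)|_L=g$. As in the proof of Theorem~\ref{MAIN}, the Claim from the proof of Proposition~\ref{SY} shows that $\psi$ is well defined, convex, and continuous on $T[D]$.

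The core technical point, and the only addition beyond what already appears in Theorem~\ref{MAIN}, is to strengthen the implication `$\psi$ Fr\'echet differentiable at $Tf$ $\Longrightarrow$ $\varphi$ Fr\'echet differentiable at $f$' used there into a full equivalence: for every $f\in D$,
\[
\varphi\text{ is Fr\'echet differentiable at }f \iff \psi\text{ is Fr\'echet differentiable at }Tf.
\]
The `$\Leftarrow$' direction is precisely the argument already given. For `$\Rightarrow$', I would assume $\varphi$ is Fr\'echet differentiable at $f$ with derivative $\eta\in C_k(X)^*$, and begin by showing that $\eta$ vanishes on $\ker T=\{h\in C_k(X):h|_L=0\}$: for such $h$ the Claim from Proposition~\ref{SY} (with $K$ replaced by $L$) gives $\varphi(f+th)=\varphi(f)$ for all sufficiently small $t$, and the Yamamuro form of the derivative applied to the bounded singleton $\{h\}$ forces $\eta(h)=0$. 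Hence $\eta$ factors as $\eta=\tilde\eta\circ T$ for a unique linear $\tilde\eta:C(L)\to\R$, and continuity of $\tilde\eta$ is immediate from continuity of $\eta$ together with the fact that every $g\in B_{C(L)}$ lifts by Proposition~\ref{T} to some $\check g$ in the bounded set $M=\{k\in C_k(X):k[X]\subset[-1,1]\}$. A direct substitution, using Yamamuro--Fr\'echet differentiability of $\varphi$ along the bounded set $M$, then yields
\[
\sup_{g\in\ob_{C(L)}}\bigl|\psi(Tf+tg)-\psi(Tf)-\tilde\eta(tg)\bigr|=o(t)\quad{\rm as}\quad t\to 0,
\]
i.e., Fr\'echet differentiability of $\psi$ at $Tf$ with derivative $\tilde\eta$.

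Granting the equivalence, the conclusion is immediate. By Proposition~\ref{Phelps} applied to the continuous convex function $\psi$ on the open subset $T[D]$ of the Banach space $C(L)$, the set $\Omega$ of Fr\'echet differentiability points of $\psi$ is a $G_\delta$ in $T[D]$, hence a $G_\delta$ in $C(L)$. Continuity of $T$ then makes $T^{-1}(\Omega)$ a $G_\delta$ in $C_k(X)$, and intersecting with the open set $D$ preserves this property. The equivalence established above identifies $T^{-1}(\Omega)\cap D$ with the set of Fr\'echet differentiability points of $\varphi$, which is thereby shown to be a $G_\delta$ subset of $D$. The main obstacle is the `$\Rightarrow$' half of the equivalence: one must notice that although the Fr\'echet derivative of $\varphi$ is \emph{a priori} a functional on all of $C_k(X)$, the local dependence of $\varphi$ on restrictions to $L$ (captured by the Claim in Proposition~\ref{SY}) forces it to factor through $T$; after that, the remaining estimates reduce to routine use of Tietze extensions and the Yamamuro definition.
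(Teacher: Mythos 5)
Your proposal is correct and follows essentially the route the paper itself indicates: it reuses the reduction via the restriction operator $T:C_k(X)\to C(L)$ and the induced convex continuous function $\psi$ on the open set $T[D]$ from the proof of (2)$\Rightarrow$(1) of Theorem~\ref{MAIN}, and then invokes Proposition~\ref{Phelps} for the Banach space $C(L)$. You have moreover correctly identified and supplied the one detail the paper leaves implicit, namely the converse implication that Fr\'echet differentiability of $\varphi$ at $f$ forces that of $\psi$ at $Tf$ (via the factorization of $\varphi'(f)$ through $T$, using that $\varphi'(f)$ annihilates $\ker T$), which is exactly what is needed to identify the differentiability set of $\varphi$ with $T^{-1}(\Omega)\cap D$.
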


\section{$\Delta_1$-spaces $X$ and the Asplund property for spaces  $C_k(X)$}

The aim of this section is to sketch briefly a proof of Theorem \ref{mainIII} which originally appeared in a bit stronger form as \cite[Theorem 3.2]{KKuL}. 
The class of $\Delta_1$-spaces introduced and explored in \cite{KKuL} plays here a key role.
For any $\Delta_1$-space $X$, each compact subset of $X$ is scattered, and hence $C_k(X)$ is an Asplund lcs.
Similarly, we obtain another equivalent topological description of a compact space $X$ such that the Banach space $C(X)$ is Asplund.

Let $X$ be a topological space. The following well-known topological notions generalize compactness.
\begin{itemize}
\item $X$ is called {\it $\omega$-bounded} if the closure of every countable subset in $X$ is compact.
\item $X$ is called {\it countably compact} if every countably infinite subset of $X$ has a limit point.
\item $X$ is called {\it pseudocompact} if every continuous real-valued function on $X$ is bounded.
\end{itemize}

Clearly, $X$ is $\omega$-bounded implies that $X$ is countably compact, and the latter implies pseudocompactness of $X$.
For the various examples distinguishing these and some other classes of topological spaces the reader is advised to look \cite{KKuL}.
Here we mention only that a very concrete example of a locally compact non-compact space  which is $\omega$-bounded and scattered, 
is the interval of countable ordinals $[0,\omega_1)$ with the order topology.
The Tychonoff plank $X=[0,\omega_1]\times [0,\omega]\setminus\{(\omega_1,\omega)\}$ is a pseudocompact scattered space which is not $\omega$-bounded.

\begin{theorem} \cite{KKuL} \label{mainIII}
For a Tychonoff $\omega$-bounded space $X$ the following assertions are equivalent:
\begin{enumerate}
\item $X$ is scattered.
\item Every compact subset of $X$ is scattered.
\item $X$ is a $\Delta_1$-space.
\item $C_k(X)$ is an Asplund space.
\end{enumerate}
\end{theorem}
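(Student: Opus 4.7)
The plan is to reduce Theorem~\ref{mainIII} to the already-proven Theorem~\ref{MAIN} plus one new lemma. First, $(1)\Rightarrow(2)$ is immediate because scatteredness is hereditary, the equivalence $(2)\Leftrightarrow(4)$ is exactly the content of Theorem~\ref{MAIN}, and $(3)\Rightarrow(2)$ is recorded in the paragraph preceding the statement. The reverse linkage to $(3)$ I would cite from the topological characterizations of $\Delta_1$-spaces developed in \cite{KKuL}, since the $\Delta_1$-machinery is not set up in this excerpt. Thus the only genuinely new ingredient is the implication $(2)\Rightarrow(1)$ under $\omega$-boundedness, which I would establish by contraposition: assuming $X$ is $\omega$-bounded and not scattered, I would produce a compact non-scattered subset of $X$, contradicting~(2).

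For the construction, let $S\subset X$ be a nonempty closed subset with $S=S'$. By dyadic induction on $\sigma\in 2^{<\omega}$ I would build nonempty open subsets $V_\sigma\subset X$ and points $x_\sigma\in V_\sigma\cap S$ such that $\overline{V_{\sigma^\frown 0}}$ and $\overline{V_{\sigma^\frown 1}}$ are disjoint subsets of $V_\sigma$. The inductive step is routine: because $x_\sigma$ is not isolated in $S$, pick $y_\sigma\in V_\sigma\cap S\setminus\{x_\sigma\}$ and use regularity to separate $x_\sigma$ from $y_\sigma$ by open sets with disjoint closures contained in $V_\sigma$. The key point is that the countable set $C:=\{x_\sigma:\sigma\in 2^{<\omega}\}$ has compact closure $K:=\overline{C}$ in $X$, thanks to $\omega$-boundedness---this is the sole place where that hypothesis is used.

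Standard tree bookkeeping then shows $\overline{V_\sigma}\cap\overline{V_{\sigma'}}=\emptyset$ whenever $\sigma\neq\sigma'$ have the same length. Setting $K_n:=K\cap\bigcup_{|\sigma|=n}\overline{V_\sigma}$ gives a decreasing sequence of nonempty closed subsets of the compact space $K$, so by compactness $K^*:=\bigcap_n K_n$ is a nonempty compact subset of $X$. Each $z\in K^*$ determines, at each level $n$, the unique $\sigma$ of length $n$ with $z\in\overline{V_\sigma}$, and these $\sigma$'s assemble into a branch $\pi(z)\in 2^\omega$. The preimage $\pi^{-1}([\sigma])=K^*\cap\overline{V_\sigma}$ is both closed and, as the complement of the finite union $\bigcup_{|\tau|=|\sigma|,\,\tau\neq\sigma}(K^*\cap\overline{V_\tau})$, open in $K^*$; hence $\pi$ is continuous. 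Surjectivity follows from the finite intersection property applied to $\overline{V_{b|n}}\cap K$ for each branch $b\in 2^\omega$. Composing $\pi$ with the standard surjection $2^\omega\twoheadrightarrow[0,1]$ and invoking Theorem~\ref{sca}(2) then forces $K^*$ to be non-scattered, completing the contradiction.

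The main obstacle is conceptual rather than computational: without the $\omega$-bounded hypothesis the implication $(2)\Rightarrow(1)$ fails decisively, as the rationals $\mathbb{Q}$ form a non-scattered Tychonoff space every compact subset of which is scattered. So $\omega$-boundedness has to be exploited precisely to compactify the countable tree $C$, and the delicate technical point is that since the $V_\sigma$'s need not shrink to points in any metric sense, the continuity of $\pi$ has to be extracted from the finite clopen decomposition of $K^*$ at each level rather than from any refinement argument.
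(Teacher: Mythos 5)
Your proposal is correct, and its one substantive new ingredient---the implication $(2)\Rightarrow(1)$ under $\omega$-boundedness---is proved by a genuinely different route than the paper's. The paper isolates $(1)\Leftrightarrow(2)$ as Lemma~\ref{ten} and proves the hard direction by citation: since an $\omega$-bounded space is pseudocompact, it invokes \cite[Proposition 5.5]{Leiderman-Tka} to obtain a closed subset $F\subset X$ admitting a continuous surjection $h\colon F\twoheadrightarrow[0,1]$, then picks a countable $C\subset F$ with $h(C)$ dense in $[0,1]$ and uses $\omega$-boundedness to conclude that $A:=\overline{C}$ is a compact subset still mapping onto $[0,1]$, contradicting Theorem~\ref{sca}. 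You instead build the surjection from scratch: a Cantor scheme $\{V_\sigma\}_{\sigma\in 2^{<\omega}}$ of open sets with disjoint closures anchored at points $x_\sigma$ of a dense-in-itself closed set $S$, using only regularity of $X$; $\omega$-boundedness enters in exactly the same role as in the paper (compactifying the closure of the countable set of nodes), and the continuous surjection $K^*\twoheadrightarrow 2^\omega\twoheadrightarrow[0,1]$ is then extracted by the clopen level-decomposition you describe, which is sound. What your version buys is self-containedness---it removes the dependence on the external pseudocompactness result, at the cost of the tree bookkeeping; what the paper's version buys is brevity. The remaining implications are handled the same way in both: $(2)\Leftrightarrow(4)$ is Theorem~\ref{MAIN}, and the links to $(3)$ rest on the cited facts that regular scattered spaces are $\Delta_1$ (Theorem~\ref{del-1}(a)) and that compact $\Delta_1$-spaces are scattered (Theorem~\ref{del-1}(b)); your deferral of these to \cite{KKuL} matches the paper's own level of reliance on the literature. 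One small point worth making explicit in a final write-up: when you pass from ``$X$ is not scattered'' to a nonempty closed $S$ with $S=S^{(1)}$, you should note that the closure of a dense-in-itself subset is again dense-in-itself, since Lemma~\ref{scattered} as stated in the paper covers only the compact case.
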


\begin{definition} \cite{KKuL1}, \cite{KL1}
A topological  space $X$ is called a $\Delta$-space  ($\Delta_1$-space) if for every decreasing sequence $(D_n)_{n\in\omega}$
of (countable, respectively) subsets of $X$ with $\bigcap_{n\in\omega}D_n=\emptyset$, there is a decreasing sequence of open sets $(V_n)_{n\in\omega}$ such that $D_n \subset V_n$ 
for each $n\in\omega$,
and again with $\bigcap_{n\in\omega}V_n=\emptyset$.
\end{definition}

The original definition of a $\Delta$-set of the real line $\R$ is due to M. Reed and E. van Douwen, see \cite{Reed}.
The $\Delta$- and $\Delta_1$-properties turned out to have nice applications in the theory of function spaces $C(X)$ endowed with the topology
of pointwise convergence, see \cite{KKuL1}, \cite{KL1}. 

Another motivation for studying the $\Delta_1$-property is provided by the fact that it extends the classical 
notion of $\lambda$-sets. Recall that $X\subset\R$ is called a $\lambda$-set if every countable $A \subset X$ is a $G_{\delta}$-subset of $X$ and, 
more generally, a topological space $X$ is a $\lambda$-space if every countable subset $A \subset X$ is a $G_{\delta}$. The study 
of $\lambda$-sets dates back to 1933 when K. Kuratowski proved in ZFC (see \cite[Chapter 3, §40.III]{k}) that there exist uncountable $\lambda$-sets. 
Since every $\lambda$-space has the $\Delta_1$-property, the notion of a $\Delta_1$-space determines a wider class than the class 
of $\lambda$-spaces. It is also worth mentioning that a metrizable space has the $\Delta_1$-property if and only if it is a 
$\lambda$-space according to \cite[Theorem 2.19]{KKuL1}.

First we present a proof of the equivalence (1) and (2) in Theorem \ref{mainIII}.
\begin{lemma}\label{ten}
Let $X$ be a Tychonoff $\omega$-bounded space. Then every compact subset of $X$ is scattered 
if and only if  $X$ is scattered.
\end{lemma}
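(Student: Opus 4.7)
The direction $X$ scattered $\Rightarrow$ every compact subset scattered is immediate, since scatteredness is inherited by arbitrary subspaces. For the converse I argue the contrapositive: assuming $X$ is not scattered, I will exhibit a compact non-scattered subset of $X$.

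Non-scatteredness of $X$ gives a nonempty subset with no isolated point, and passing to closure one obtains a nonempty closed $S\subset X$ still with no isolated point in $S$. The plan is a standard Cantor-scheme construction inside $S$, followed by taking closure in $X$ and invoking $\omega$-boundedness. Concretely, I recursively build $\{x_s\}_{s\in 2^{<\omega}}\subset S$ and open sets $\{U_s\}_{s\in 2^{<\omega}}$ in $X$ with $x_s\in U_s$, $\overline{U_{s\frown 0}}\cap\overline{U_{s\frown 1}}=\emptyset$, and $\overline{U_{s\frown 0}}\cup\overline{U_{s\frown 1}}\subset U_s$. The ingredients are (i) regularity of $X$ (from Tychonoffness) and (ii) the observation that any nonempty open set meeting $S$ meets $S$ in infinitely many points, a consequence of the no-isolated-point property of $S$.

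Set $D:=\{x_s:s\in 2^{<\omega}\}$; $\omega$-boundedness of $X$ makes $K:=\overline D$ a compact subset of $X$. For each $\sigma\in 2^\omega$ the family $\{\overline{U_{\sigma|n}}\cap K\}_n$ is a decreasing sequence of nonempty compact subsets of $K$ (nonempty because $x_{\sigma|m}\in U_{\sigma|m}\subset U_{\sigma|n}$ whenever $m\geq n$), so $P_\sigma:=\bigcap_n(\overline{U_{\sigma|n}}\cap K)$ is nonempty by the finite intersection property; the Cantor-scheme disjointness makes the $P_\sigma$'s pairwise disjoint. Let $P:=\bigcup_{\sigma\in 2^\omega}P_\sigma=\bigcap_n \bigcup_{|s|=n}(\overline{U_s}\cap K)$, a closed, hence compact, subset of $K$. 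The assignment $\psi(x):=\sigma$ when $x\in P_\sigma$ defines a well-defined surjection $\psi:P\twoheadrightarrow 2^\omega$, and it is continuous: for each $t\in 2^{<\omega}$, $\psi^{-1}([t])=\overline{U_t}\cap P$ is closed in $P$, while its complement equals the finite union $\bigcup_{|t'|=|t|,\,t'\neq t}(\overline{U_{t'}}\cap P)$, which is again closed, so $\psi^{-1}([t])$ is also open.

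Since $2^\omega$ has no isolated points it is not scattered, and Theorem~\ref{sca}(1) then forces the compact $P$ to be non-scattered, contradicting the hypothesis. The only nonroutine step is carrying out the Cantor scheme in a general (non-metric, non-first-countable) Tychonoff space; regularity together with the no-isolated-point property of $S$ makes this work. The $\omega$-boundedness hypothesis is used exactly once, to promote the countable $D$ to a compact $K$ and thereby secure the finite-intersection-property argument producing the nonempty $P_\sigma$.
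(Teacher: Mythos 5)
Your proof is correct, but it takes a genuinely different route from the paper. The paper argues by contradiction using pseudocompactness (a consequence of $\omega$-boundedness): it invokes an external result of Leiderman and Tkachenko to the effect that a pseudocompact non-scattered space has a closed subspace $F$ admitting a continuous surjection $h:F\twoheadrightarrow[0,1]$, then picks a countable $C\subset F$ with $h(C)$ dense in $[0,1]$, uses $\omega$-boundedness to make $A:=\overline{C}$ compact with $h(A)=[0,1]$, and concludes via Theorem~\ref{sca}. You instead build the required surjection from scratch: a Cantor scheme inside a closed dense-in-itself subset (which exists since $X$ is not scattered and is preserved under closure), carried out using only regularity and the $T_1$ property, whose countable set of nodes has compact closure $K$ by $\omega$-boundedness; the resulting compact set $P\subset K$ maps continuously onto $2^\omega$, which is not scattered, so Theorem~\ref{sca}(1) yields the contradiction. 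The details you supply -- nonemptiness of each $P_\sigma$ via the finite intersection property, level-wise disjointness of the closures giving well-definedness and clopenness of the fibers $\psi^{-1}([t])$, and the identity $\bigcup_\sigma P_\sigma=\bigcap_n\bigcup_{|s|=n}(\overline{U_s}\cap K)$ -- all check out. Both arguments use $\omega$-boundedness in exactly the same way (to promote the closure of a countable set to a compact set) and both finish with Theorem~\ref{sca}; the paper's version is shorter because it outsources the construction of the surjection onto $[0,1]$ to a cited proposition, while yours is self-contained at the cost of carrying out the (standard but nontrivial in a general Tychonoff space) Cantor-scheme construction explicitly.
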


\begin{proof}
 Assume on the contrary that each compact subset in $X$ is scattered but $X$ is not scattered. Since the space  $X$ is pseudocompact, we can apply \cite[Proposition 5.5]{Leiderman-Tka} to get in $X$  a closed subset $F$ for which there exists  a  continuous surjection $h: F\twoheadrightarrow [0,1].$  Let $Q$ be a dense countable subset of $[0,1]$. Choose a countable subset $C\subset F$ such that $h(C)=Q$. 
 Let $A:=\overline{C}$ be the closure in $F$. Note that $F$ is also $\omega$-bounded. Therefore, $A$ is compact and obviously $h(A)=[0,1]$. 
By assumption, the compact set $A$ is scattered, this is a  contradiction, in view of Theorem \ref{sca}. Hence $X$ is scattered. The converse is clear.
\end{proof}

We refer the reader to the following important results concerning $\Delta_1$-spaces.

\begin{theorem}\label{del-1}\hfill
\begin{itemize}
\item [(a)] \cite[Theorem 2.15]{KKuL1}
Every regular scattered space is a $\Delta_1$-space.
\item [(b)]  \cite[Corollary 2.16]{KKuL1}
A compact space $X$ is a $\Delta_1$-space if and only if $X$ is scattered.
\item [(c)]  \cite[Theorem 2.9]{KKuL1}
A pseudocompact space $X$ is a $\Delta_1$-space if and only if every countable subset of $X$ is scattered.
\item [(d)]  \cite[Theorem 3.9]{KKuL1}
Let $X$ be a countable union of closed subsets $X_n$. If each $X_n$ is a $\Delta_1$-space 
then $X$ also is a $\Delta_1$-space.
\end{itemize}
\end{theorem}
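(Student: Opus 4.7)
The plan is to arrange the four conditions into the cycle $(4)\Leftrightarrow(2)\Leftrightarrow(1)\Rightarrow(3)\Rightarrow(2)$, since each link can be supplied by a result already established in the excerpt, together with one short auxiliary observation.

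First, $(2)\Leftrightarrow(4)$ is simply the specialization of Theorem \ref{MAIN} to our Tychonoff space $X$, and $(1)\Leftrightarrow(2)$ is precisely Lemma \ref{ten}. The $\omega$-boundedness hypothesis enters exactly once in the whole proof, namely in the nontrivial direction $(2)\Rightarrow(1)$ of Lemma \ref{ten}, where it is needed to force a suitable countable set witnessing non-scatteredness to have compact closure. With these equivalences established, the entire theorem reduces to the single task of inserting condition $(3)$ into the cycle.

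For $(1)\Rightarrow(3)$ I would simply invoke Theorem \ref{del-1}(a): every Tychonoff (in particular regular) scattered space is a $\Delta_1$-space. For $(3)\Rightarrow(2)$ I would first record the one-line observation that the $\Delta_1$-property is inherited by closed subspaces. Indeed, if $Y\subset X$ is closed and $(D_n)$ is a decreasing sequence of countable subsets of $Y$ with $\bigcap_n D_n=\emptyset$, then $(D_n)$ is also such a sequence in $X$, so the $\Delta_1$-property of $X$ yields decreasing open sets $V_n\subset X$ with $D_n\subset V_n$ and $\bigcap_n V_n=\emptyset$; the traces $V_n\cap Y$ then witness the $\Delta_1$-property inside $Y$. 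Since any compact $K\subset X$ is closed in the Hausdorff space $X$, it inherits the $\Delta_1$-property, and then Theorem \ref{del-1}(b) declares $K$ scattered.

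The only real obstacle is to be sure of the closed-subspace preservation of $\Delta_1$, which is not listed in Theorem \ref{del-1}; happily it follows immediately from the definition, as sketched above. An alternative route to $(3)\Rightarrow(2)$ would go through Theorem \ref{del-1}(c): $\omega$-bounded spaces are pseudocompact, so $\Delta_1$ would force every countable subset of $X$ to be scattered, and one could then imitate the closure-taking argument from the proof of Lemma \ref{ten}. The closed-subspace route seems cleaner and avoids duplicating that argument, so I would adopt it.
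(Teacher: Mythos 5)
There is a genuine gap, and it is structural: you have proved the wrong statement. The theorem you were asked to prove is Theorem \ref{del-1}, which consists of four separate facts about $\Delta_1$-spaces quoted from \cite{KKuL1}: (a) every regular scattered space is $\Delta_1$; (b) a compact space is $\Delta_1$ iff it is scattered; (c) a pseudocompact space is $\Delta_1$ iff all its countable subsets are scattered; (d) a countable union of closed $\Delta_1$-subspaces is $\Delta_1$. Your cycle $(4)\Leftrightarrow(2)\Leftrightarrow(1)\Rightarrow(3)\Rightarrow(2)$ is instead a proof of Theorem \ref{mainIII} (the four equivalent conditions for an $\omega$-bounded space), and it invokes precisely Theorem \ref{del-1}(a) and Theorem \ref{del-1}(b) as black boxes. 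Read as a proof of Theorem \ref{del-1}, the argument is therefore circular, and items (c) and (d) are never argued at all (you mention (c) only as an alternative tool, again assuming it). Nothing in your text engages with the definition of a $\Delta_1$-space in the way a proof of (a)--(d) must: for instance, (a) requires an actual construction of the decreasing open expansions, carried out in \cite{KKuL1} by transfinite induction along the Cantor--Bendixson decomposition, with regularity used to separate the isolated points of each derived set; the nontrivial direction of (b) needs to show that a non-scattered compact space fails the $\Delta_1$-property (here one may note, e.g., that for metrizable spaces $\Delta_1$ coincides with being a $\lambda$-space, and $[0,1]$ is not a $\lambda$-space since $\mathbb{Q}\cap[0,1]$ is not $G_\delta$ by Baire category); and (d) needs an explicit amalgamation of the expansions obtained inside each closed piece $X_n$. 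None of these ideas can be extracted from the results proved in the present paper --- indeed the paper uses (a)--(d) as external inputs, so deriving them from Theorem \ref{mainIII} or Theorem \ref{MAIN} is not an option.

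For what it is worth, as a blind proof of Theorem \ref{mainIII} your argument would be correct and essentially identical to the paper's own proof of that theorem: the paper likewise gets $(1)\Leftrightarrow(2)$ from Lemma \ref{ten}, $(1)\Rightarrow(3)$ from Theorem \ref{del-1}(a), and $(3)\Rightarrow(2)$ from Theorem \ref{del-1}(b) after passing to compact subsets, leaving the link with $(4)$ to Theorem \ref{MAIN}. Your one-line verification that the $\Delta_1$-property passes to closed subspaces (restricting the witnessing open sets $V_n$ to $V_n\cap Y$, which preserves decreasingness and empty intersection) correctly fills in a step the paper states without proof (``then every compact subset of $X$ is a $\Delta_1$-space''), and your observation that $\omega$-boundedness is used only in $(2)\Rightarrow(1)$ of Lemma \ref{ten} is accurate. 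But that does not discharge the task you were given: to prove Theorem \ref{del-1} you must reconstruct (or cite and reprove) the arguments of \cite{KKuL1}, and your proposal contains none of them.
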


We are ready to prove our main theorem of this section.

\begin{proof}[Proof of Theorem \ref{mainIII}]

(1)$\Longleftrightarrow$(2) This is Lemma \ref{ten}.

(1)$\Longrightarrow$(3)  Trivially follows from Theorem \ref{del-1}(a).  

(3)$\Longrightarrow$(2) Assume  that $X$ is a $\Delta_1$-space. Then every compact subset of $X$ is a $\Delta_1$-space. 
We apply Theorem \ref{del-1}(b).
\end{proof}

\begin{remark} The statement of Theorem \ref{mainIII} is not true if we assume only that $X$ is countably compact.
Indeed, there exists a countably compact non-scattered dense subspace $X$ in the reminder of the Stone-\v{C}ech compactification $\beta \omega\setminus \omega$ 
such that all countable subsets of $X$ are scattered, hence $X$ is a $\Delta_1$-space. Moreover, every compact subset of $X$ is finite, see \cite[Example 2.18]{KKuL1}.
\end{remark}

\begin{remark} There are various examples of Tychonoff spaces $X$ such that every compact subset of $X$ is scattered but $X$ is not a $\Delta$-space.
A {\it Bernstein set} $\B$ is a subset of the real line such that both $\B$ and $\R\setminus\B$ intersect with every uncountable closed subset of the real line.
It follows that every compact subset of $\B$ is countable. However, $\B$ is not a $\Delta_1$-space, see \cite[Example 2.20]{KKuL1}. 

Also, there is a pseudocompact space $X$ such that every compact subset of $X$ is scattered, but $X$ is not a $\Delta_1$-space, see \cite[Example 3.2]{KKuL1}. 
\end{remark}

We  complete this section  again with the Namioka--Phelps's theorem which is actually extended  by  an equivalent condition (2).
\begin{theorem}
For a compact space $X$ the following assertions are equivalent:
\begin{enumerate}
\item $X$ is scattered.
\item $X$ is a $\Delta_1$-space.
\item The Banach space $C(X)$ is Asplund.
\end{enumerate}
\end{theorem}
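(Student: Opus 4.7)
The statement is essentially a direct corollary of results already established or cited earlier in the paper, so the proof plan is mostly bookkeeping rather than new argument. The plan is to verify each equivalence separately by invoking two earlier results: the compact case of the $\Delta_1$ characterization from Theorem \ref{del-1}(b), and the classical Namioka--Phelps theorem (Theorem \ref{Namioka+Phelps}).

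First I would handle $(1) \Longleftrightarrow (2)$. This is immediate from Theorem \ref{del-1}(b), which literally states that a compact space is a $\Delta_1$-space if and only if it is scattered. So no additional work is needed here; I would simply cite it. If one wanted to avoid relying on Theorem \ref{del-1}(b) as a black box, one could observe that the forward direction follows from Theorem \ref{del-1}(a) (every regular scattered space is $\Delta_1$), and the reverse direction follows from Theorem \ref{del-1}(c) together with the fact that a compact space is in particular pseudocompact, plus the observation that in a compact scattered space every countable subset is scattered; but chaining these through Theorem \ref{del-1}(b) is cleaner.

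Next, I would dispose of $(1) \Longleftrightarrow (3)$, which is exactly the content of the Namioka--Phelps theorem (Theorem \ref{Namioka+Phelps}), already established earlier in the paper. Again nothing new is required, just a direct reference.

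There is no real obstacle: the whole statement is a synthesis of two previously proved facts, highlighting that in the compact case the $\Delta_1$-property provides a purely topological reformulation of the Asplund property of $C(X)$ equivalent to scatteredness. The only point worth being slightly careful about is noting that the compactness hypothesis is what lets us appeal to Theorem \ref{del-1}(b) rather than needing the broader $\omega$-boundedness framework used in Theorem \ref{mainIII}, so the proof is a one-line combination: $(1) \Leftrightarrow (2)$ by Theorem \ref{del-1}(b) and $(1) \Leftrightarrow (3)$ by Theorem \ref{Namioka+Phelps}.
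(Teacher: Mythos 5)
Your proposal is correct and matches the paper's intended argument exactly: the paper states this theorem without a written proof precisely because it is the combination of Theorem \ref{Namioka+Phelps} (giving $(1)\Leftrightarrow(3)$) and Theorem \ref{del-1}(b) (giving $(1)\Leftrightarrow(2)$). The only quibble is in your optional aside: to get the reverse direction from Theorem \ref{del-1}(c) you would need that a \emph{compact} space all of whose countable subsets are scattered is itself scattered (a Lemma \ref{ten}-type closure argument, and false for mere countable compactness), not the trivial observation you stated --- but since you route through \ref{del-1}(b) anyway, this does not affect the proof.
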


\section{Final remarks and illustrating examples}

It is well known (but not so easy to prove) that every closed vector subspace of any Asplund Banach space is again Asplund, see for example  \cite{p}, \cite[Proposition 2.33]{np}.  We show however
that this property fails in general lcs, see \cite{Kakol-Leiderman}.
First recall that $\varphi$ denotes an $\aleph_{0}$-dimensional vector space endowed with the finest locally convex topology, say $\xi_\infty$, that is, the topology, where
a base of the origin consists of all absolutely convex absorbing sets. Then, clearly, every linear functional on $\varphi$ is continuous.
 
Every bounded set $B$  in $\varphi$ is finite-dimensional, i.e. the linear span of $B$ is finite-dimensional, because otherwise (applying the Hahn--Banach theorem) one can  construct a linear discontinuous  functional on $\varphi$. On the other hand, $\varphi$  is the strict countable inductive limit of finite-dimensional Banach vector subspaces $\R^{n}$.
Indeed, consider on $\varphi$ the locally convex inductive limit topology $\xi$ generated by finite-dimensional linear subspaces, see \cite[Chapter 2.4]{KKPS} or \cite[0.3.1]{Bonet}.
Then, since the original topology $\xi_{\infty}$ of $\varphi$ is the finest locally convex topology on $\varphi$, the both topologies $\xi$ and $\xi_{\infty}$ coincide. Moreover, the topology 
$\xi=\xi_{\infty}$ is complete, see  \cite[Proposition 8.4.16 (iv)]{Bonet},  or \cite[Proposition 1]{Zelazko} for a direct proof.

B. Sharp \cite{s} observed that the space $\varphi$ is not GDS.
To show this, consider the $l_1$ norm $\|\cdot\|_{1}$ on it. This is a convex function, continuous with respect to the original topology $\xi_\infty$ on $\varphi$.
And it is easy to check that $\|\cdot\|_{1}$ is nowhere  G\^ ateaux differentiable on $\varphi$ [32, Example 1.4. (b)].
Indeed, given any $x\in \varphi$,
there is an $n\in \N$ such that the $n$-th coordinate of $x$ is 0.
Let $e_n:= (0, 0, \dots, 1, 0, \dots)$ be the sequence whose only nonzero term is equal to 1 on the $n$-th place.
Then
$$
\lim_{t\downarrow0}(\|x+t e_n\|_1-\|x\|_1)/t= 1\quad {\rm and}\quad \lim_{t\uparrow0}(\|x+t e_n\|_1-\|x\|_1)/t= -1,
$$
and hence, $\|\cdot\|_1$ is not  G\^ ateaux differentiable at the point $x$.

\begin{remark}
Note that from the description $\varphi$  as the the strict countable inductive limit of finite-dimensional linear subspaces $\R^{n}$  it follows that the lcs $\varphi$ can be identified with the free locally convex space $L(\N)$. It has been shown in \cite{Kakol-Leiderman} that the free lcs $L(X)$ is not GDS for every infinite Tychonoff space $X$.
\end{remark}

We complete this section with  the following interesting example mentioned in \cite{Kakol-Leiderman}. Below $Q$  denotes the set of all rational numbers with the topology induced from $\R$. 
So, $Q$ is a metric countable space.

\begin{proposition}\cite {s}, \cite{Kakol-Leiderman}
The space $C_k(Q)$ over the rationals $Q$ is an Asplund space but contains isomorphically a closed copy of the non-Asplund space $\varphi$.
\end{proposition}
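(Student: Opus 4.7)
My plan is to prove the two assertions separately.

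For the first---that $C_k(Q)$ is Asplund---I invoke Theorem~\ref{MAIN}: it suffices to show that every compact $K\subset Q$ is scattered. Any such $K$ is countable (as a subset of $Q$) and Hausdorff, hence metrizable; and a countable compact metric space must be scattered, since a nonempty perfect subset would itself be a nonempty perfect compact metric space, which is uncountable by the Baire category theorem, contradicting countability.

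For the second assertion, the plan is to exhibit a sequence $\{f_n\}_{n\in\N}\subset C_k(Q)$ so that the linear map $T\colon\varphi\to C_k(Q)$, $e_n\mapsto f_n$, is an injective topological embedding with closed image. Since $\varphi$ carries the finest locally convex topology, $T$ is automatically continuous, so the work consists in verifying injectivity, coincidence of the subspace topology on $T(\varphi)$ with $\xi_\infty$, and closedness of the image. The $f_n$'s should be constructed via Tietze extension (Proposition~\ref{T}) from functions defined on a closed discrete subset of $Q$, for instance $\N\subset Q$, which is closed in $Q$ because $\N$ is closed in $\R$, and discrete because $\{n\}=(n-\tfrac12,n+\tfrac12)\cap\N$ is open in $\N$. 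The construction will have to exploit that $Q$ is not hemicompact---its compact subsets range over all countable compact metric spaces and admit no countable cofinal family---which is precisely what makes $C_k(Q)$ non-metrizable and leaves room for the non-metrizable space $\varphi$.

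The main obstacle will be verifying that the subspace topology on $F:=T(\varphi)$ agrees with $\xi_\infty$, equivalently that for every seminorm $p$ on $F$ there exist a compact $K\subset Q$ and $C>0$ with $p(g)\le C\sup_K|g|$ for all $g\in F$. The most delicate instances are the weighted $\ell^1$ seminorms $p(\sum_n c_nf_n)=\sum_n\lambda_n|c_n|$: the required inequality $\sum_n\lambda_n|c_n|\le C\sup_K|\sum_n c_nf_n|$ cannot come from an $\ell^1$-isomorphic embedding of $c_{00}$ into $C(K)$ (since $K$ is scattered, Theorem~\ref{1} forbids $\ell^1\subset C(K)$), but such a one-sided lower estimate is still compatible with that prohibition provided the norms $\sup_K|f_n|$ grow unboundedly in $n$, a freedom afforded by the Tietze construction. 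Once the topologies are seen to agree, closedness of $F$ in $C_k(Q)$ is automatic, since $\varphi$ is complete in $\xi_\infty$ (as recalled earlier in this section) and a complete subspace of the Hausdorff locally convex space $C_k(Q)$ is closed.
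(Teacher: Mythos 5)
Your first half is correct and is essentially the paper's argument: every compact subset of $Q$ is countable, hence scattered, and Theorem~\ref{MAIN} gives the Asplund property. The second half, however, is a plan rather than a proof, and its central step is both unexecuted and, as designed, unworkable. You correctly isolate the crucial point --- that for every weight sequence $(\lambda_n)$ the seminorm $p\big(\sum_n c_nf_n\big)=\sum_n\lambda_n|c_n|$ on $F=T(\varphi)$ must be dominated by $C\sup_K|\cdot|$ for a single compact $K\subset Q$ --- but you do not establish it. Testing $p$ on the basis vectors already forces $\sup_K|f_n|\ge\lambda_n/C$ for \emph{every} $n$ simultaneously, with one fixed $K$. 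Now any compact $K\subset Q$ is bounded in $\R$ and therefore meets the closed discrete set $\N$ in a finite set; so if the $f_n$ are Tietze extensions of functions living on $\N$, with the large values of $f_n$ located near the point $n$, then $\sup_K|f_n|$ is uniformly controlled for all but finitely many $n$, and choosing $\lambda_n$ to grow fast enough defeats the estimate. The same objection kills any choice of $f_n$ that are bounded on $Q$, since then $\sup_K|f_n|\le\|f_n\|_\infty$ for every $K$ while $\lambda_n$ may grow faster. A genuine copy of $\varphi$ requires functions whose unboundedness accumulates at a single point of $Q$, so that one compact set can witness arbitrarily fast growth in all coordinates at once, and even then one must control $\sup_K\big|\sum_n c_nf_n\big|$ from below for general combinations, not just basis vectors, despite possible cancellation. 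None of this is addressed, so the embedding is not constructed.

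The paper avoids the direct construction altogether. It shows that $C_k(Q)$ is barrelled (Nachbin--Shirota, since $Q$ is a $\mu$-space) but not Baire-like: the sets $H_n=\{f\in C(Q):\ f[(-1/n,1/n)\cap Q]\subset[-n,n]\}$ form an increasing sequence of closed convex absorbing sets covering $C_k(Q)$, and none is a neighbourhood of $0$ because a compact $K\subset Q$ is countable and hence cannot contain $(-1/n,1/n)\cap Q$, whose closure in $\R$ is an uncountable interval. Saxon's theorem then yields a closed copy of $\varphi$ in any barrelled space that is not Baire-like. A self-contained direct embedding along the lines you sketch would amount to reproving Saxon's theorem in this special case; as written, your argument has a genuine gap at exactly that point.
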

\begin{proof}
Every compact subset of $Q$ is countable, hence it is scattered. By Theorem \ref{MAIN} we deduce that $C_k(Q)$ is an Asplund space. Since $Q$ is a $\mu$-space, the space $C_k(Q)$ is barrelled by the Nachbin--Shirota theorem, see \cite[Propostion 2.4.16]{KKPS}, i.e., every closed absolutely convex absorbing subset of $C_k(Q)$ is a neighbourhood of zero.

On the other hand, the space $C_k(Q)$ is not Baire-like, i.e., there exists an increasing sequence of closed convex absorbing subsets $A_1, A_2,\ldots$ covering the space $C_k(Q)$ such that no $A_n$ is a neighbourhood of the origin in $C_k(Q)$. Indeed, assume on the contrary that $C_k(Q)$ is a Baire-like space. Then we follow the argument contained in the proof of \cite[Proposition 2.4.20]{KKPS}, 
or \cite[Corollary 5.3.4]{mccoy}: For $n\in\N$ we put
$$
U_n:= (-1/n,1/n)\cap Q\quad {\rm and}\quad H_n:= \big\{f\in C(Q):\  f[U_n]\subset [-n,n]\big\}.
$$
Clearly, the $U_n$'s form a basis of neighbourhoods of $0$ in $Q$ and the (closed convex and absorbing) sets $H_n$'s cover all of $C_k(Q)$.
Also, clearly, $U_1 \supset U_2\supset \cdots$ and $H_1\subset H_2\subset \cdots$.

Since we assumed that $C_{k}(Q)$ is Baire-like, there exist a compact
set $K\subset Q$, $\varepsilon>0$, and $n\in\N$ such that
$$
\{f\in C(X):\ f[K] \subset (-\varepsilon,\varepsilon)\}\subset H_{n}.
$$
Note that $U_{n}\subset K$. Indeed, if there is a $z\in U_{n}\setminus K$, then putting
$$
f(x):= 2n\frac{{\rm dist}(x,K)}{{\rm dist}(z,K)},\quad x\in Q,
$$
we have that $f\in C_k(Q)$ and that $f(z)=2n$. Also $f_{|K}\equiv 0$, and thus $f\in U_n$, which is impossible.
Therefore, we have that $U_{n}\subset K$. This leads to a contradiction because $K$ is a countable compact subset of $\R$ and the closure of $U_{n}$ in $\R$ is an uncountable closed interval. 
We have proved that the space is $C_k(Q)$ is not Baire-like.
Finally, since $C_k(Q)$ is barrelled but not Baire-like, we apply the Saxon's result (see \cite[Corollary 2.4.4]{KKPS}), and conclude that lcs $C_k(Q)$ contains isomorphically a (closed) copy of $\varphi$.
\end{proof}

\begin{remark}  The locally convex space $\varphi$ is nonmetrizable, and it is complete as the strict countable inductive limit of finite-dimensional spaces. 
From another hand, its  strong dual $\varphi^{*}$, endowed with the uniform convergence topology on bounded sets from $\varphi$ is isomorphic to the countable power ${\R}^{\omega}$,
 hence the strong dual $\varphi^{*}$ is separable.
However, $\varphi$ is not an Asplund space.

This shows that the Asplund property for lcs differs dramatically from the Asplund property in the class of Banach spaces.
\end{remark}

\end{document}